\newcommand{\RR}{\mathbb{R}}
\newcommand{\CC}{\mathbb{C}}
\newcommand{\ZZ}{\mathbb{Z}}
\newcommand{\QQ}{\mathbb{Q}}
\newcommand{\OO}{\mathcal{O}}
\newcommand{\nrm}{\text{nrm}}
\newcommand{\tr}{\text{tr}}
\newcommand{\disc}{\text{disc}}
\newcommand{\Mat}{\text{Mat}}
\newtheorem*{rep@theorem}{\rep@title}
\newcommand{\newreptheorem}[2]{%
\newenvironment{rep#1}[1]{%
 \def\rep@title{#2 \ref{##1}}%
 \begin{rep@theorem}}%
 {\end{rep@theorem}}}
\theoremstyle{plain}
\newtheorem{definition}{Definition}[section]
\newtheorem{theorem}{Theorem}[section]
\newtheorem{corollary}{Corollary}[section]
\newtheorem{lemma}{Lemma}[section]
\theoremstyle{remark}
\newtheorem{example}{Example}
\newtheorem{remark}{Remark}[section]
\begin{document}
\title{Algebraic Groups Constructed From Orders of Quaternion Algebras}
\author{Arseniy (Senia) Sheydvasser}
\address{Department of Mathematics, Graduate Center at CUNY, 365 5th Ave, New York, NY 10016}
\email{ssheydvasser@gc.cuny.edu}

\subjclass[2010]{Primary 20G15, 11R52, 11E04, 16H10}

\date{\today}

\keywords{Algebraic groups, arithmetic groups, quaternion algebras, involutions}

\begin{abstract}
We show that all spin groups of non-definite, quinary quadratic forms over a field with characteristic $0$ can be represented as $2\times 2$ matrices with entries in an associated quaternion algebra. Over local and global fields, we further study maximal arithmetic subgroups of such groups, and show that examples can be produced by studying orders of the quaternion algebra. In both cases, we relate the algebraic properties of the underlying rings to sufficient and necessary conditions for the groups to be isomorphic and/or conjugate to one another.
\end{abstract}

\maketitle

\section{Introduction:}
The idea of representing elements of $\text{M\"{o}b}(\RR^n)$ with $2\times 2$ matrices with entries in a Clifford algebra goes back at least to Vahlen \cite{Vahlen1902}, and was later popularized by Ahlfors \cite{Ahlfors1986}. More recently, this approach was used by the author to construct explicit examples of integral, crystallographic sphere packings \cite{Sheydvasser2019}; briefly, these are generalizations of the classical Apollonian gasket which arise from hyperbolic lattices. Such packings were formally defined by Kontorovich and Nakamura \cite{KontorovichNakamura2019}, although they were studied in various forms previously. How to define $\text{M\"{o}b}(\RR)$ and $\text{M\"{o}b}(\RR^2)$ in terms of the real and complex numbers is well-known. In order to describe $\text{M\"{o}b}(\RR^3)$ in terms of $2 \times 2$ matrices, we proceed as follows: let $H_\mathbb{R}$ be the standard Hamilton quaternions, and define an involution $(w + xi + yj + zk)^\ddagger = w + xi + yj - zk$. One can then define the set
	\begin{align*}
	SL^\ddagger(2,H_\mathbb{R}) = \left\{\begin{pmatrix} a & b \\ c & d \end{pmatrix} \in \Mat(2,H_\mathbb{R}) \middle| ab^\ddagger = ba^\ddagger, \ cd^\ddagger = dc^\ddagger, \ ad^\ddagger - bc^\ddagger = 1\right\}.
	\end{align*}
	
\noindent One checks that this is a group, and that $SL^\ddagger(2,H_\mathbb{R})/\{\pm I\} \cong \text{ M\"{o}b}(\RR^3)$---or, if one prefers, since $\text{ M\"{o}b}(\RR^3) \cong SO^+(4,1)$, $SL^\ddagger(2,H_\mathbb{R}) \cong \text{Spin}(4,1)$. However, one observes that there is nothing in the definition of $SL^\ddagger(2,H_\mathbb{R})$ that is specific to the real numbers: one can just as well choose any field $F$---we shall assume throughout that $\text{char}(F) \neq 2$ for convenience---any quaternion algebra $H$ over $F$, any orthogonal involution $\ddagger$ of $H$, and this will allow you to define a group $SL^\ddagger(2,H)$. It is evident that this a linear algebraic group; we shall prove that in fact it will be the spin group of a quadratic form over $F$, just as in the classical case $F = \mathbb{R}$. We shall show that the quaternion algebra $H$ determines the spin group up to isomorphism, and the quaternion algebra $H$ together with the involution $\ddagger$ determine the conjugacy class inside of an algebraic extension of $F$. Since $\RR$ contains the square roots of the norms of all elements in $H_\RR$, this is a consideration that doesn't come up in the Lie group case.

One of the benefits of representing spin groups in terms of quaternion algebras becomes evident in the special case where $F = K$ is a number field. In this case, it is known that if you take a quadratic form $q$ over $K$ and consider the linear algebraic group $G = SO(q)$, then every arithmetic subgroup of $G$ will be commensurable to $SO(q;\mathfrak{o}_K)$ where $\mathfrak{o}_K$ is the ring of integers of $K$. However, $SO(q;\mathfrak{o}_K)$ will not in general be a maximal arithmetic subgroup. On the other hand, we show that if $H$ is a rational quaternion algebra, then it is easy to find a maximal arithmetic subgroup of $SL^\ddagger(2,H)$---it suffices to find an order $\OO$ of $H$ that is closed under the involution $\ddagger$ and which is not contained in any larger order having this property. In that case, we shall demonstrate that $SL^\ddagger(2,\OO)$ is a maximal arithmetic subgroup of $SL^\ddagger(2,H)$; this can be seen as analogous to the statement that $SL(2,\ZZ)$ and the Bianchi groups are maximal arithmetic groups. There are known algorithms for finding maximal orders in quaternion algebras over number fields---for example, Ivanos and Ronyai \cite{IvanyosRonyai} gave an algorithm that works over general semisimple algebras over $\QQ$, and Voight \cite{VoightArticle} gave an efficient algorithm specifically for quaternion algebras. These can be adapted to give an efficient algorithm for constructing maximal $\ddagger$-orders over an arbitrary number field, and so our approach certainly gives an efficient means of computing maximal arithmetic subgroups of $SL^\ddagger(2,H)$ for rational quaternion algebras---we conjecture that this is true more generally for number fields.

\section{Summary of Main Results:}

In section \ref{Preliminaries}, we give a brief review of the basic definitions of orthogonal involutions and maximal $\ddagger$-orders. Our first main result occurs in section \ref{Is a Spin Group Section}, where we prove that $SL^\ddagger(2,H)$ is a spin goup. Specifically, there is a quadratic form $q_H$ such that the following is true.

\begin{reptheorem}{Simply Connectedness}
Let $H$ be a quaternion algebra over a field $F$ of characteristic not $2$, with orthogonal involution $\ddagger$. Then, as an algebraic group, $SL^\ddagger(2,H) \cong \text{Spin}(q_H)$. In particular, it is an absolutely almost simple, simply connected group.
\end{reptheorem}

\noindent However, one can say much more. In Section \ref{Correspondence Isomorphism Section}, we prove that all spin groups of indefinite, quinary quadratic forms arise as groups $SL^\ddagger(2,H)$.

\begin{reptheorem}{Correspondence between QAs and Spin Groups}
Let $F$ be a characteristic $0$ field. There there is a bijection
	\begin{align*}
	\left\{\substack{\text{Isomorphism classes of} \\ \text{quaternion algebras over } F}\right\} &\rightarrow \left\{\substack{\text{Isomorphism classes of} \\ \text{spin groups of indefinite,} \\ \text{quinary quadratic forms over } F} \right\} \\
	[H] & \mapsto \left[SL^\ddagger(2,H)\right].
	\end{align*}
\end{reptheorem}

\begin{remark}
The restriction to characteristic $0$---rather than simply not $2$---appears due to the proof using the classification of Lie algebras and the separability of extensions of $F$. It may be possible to remove this restriction, but as our chief interest shall be in number fields, proving the result for characteristic $0$ fields will be more than sufficient.
\end{remark}

\noindent We can get more fine information than simply determining whether these groups are isomorphic. Specifically, whether or not two groups $SL^{\ddagger_1}(2,H_1)$ and $SL^{\ddagger_2}(2,H_2)$ are conjugate inside of some larger spin group can be determined by studying what algebras with involution both $(H_1,\ddagger_1)$ and $(H_2, \ddagger_2)$ embed in.

\begin{reptheorem}{Isomorphisms and Embeddings}
Let $F$ be a characteristic $0$ field, and let $H_1$, $H_2$ be isomorphic quaternion algebras over $F$, with orthogonal involutions $\ddagger_1$ and $\ddagger_2$. Then there exists a quaternion algebra $H$ over a field $F' \supset F$ with an orthogonal involution $\ddagger$ such that $(H_1, \ddagger_1)$ and $(H_2, \ddagger_2)$ embed inside of $(H, \ddagger)$. Furthermore, for any $F'$ and $(H,\ddagger)$ satisfying this condition, the natural embeddings of $SL^{\ddagger_1}(2,H_1)$ and $SL^{\ddagger_2}(2,H_2)$ are conjugate inside of $SL^\ddagger(2,H)$.
\end{reptheorem}

\noindent All of the above theorems only require the quaternion algebra $H$ to be considered up to ring isomorphism, which does not necessarily preserve the involution $\ddagger$. A more refined notion of isomorphism can be produced by asking under what circumstances spin groups $SL^\ddagger(2,H)$ are related by conjugation that preserves the subgroup $SL(2,F)$.

\begin{reptheorem}{Algebraic Group Isomorphism Fixing Base Group}
Let $F$ be a characteristic $0$ field, and let $H_1$, $H_2$ be isomorphic quaternion algebras over $F$, with orthogonal involutions $\ddagger_1$ and $\ddagger_2$. The following two statements are equivalent.
	\begin{enumerate}
		\item If $H$ is a quaternion algebra over a field extension $F'$ with orthogonal involution $\ddagger$ such that $(H_1, \ddagger_1)$, $(H_2, \ddagger_2) \hookrightarrow (H,\ddagger)$, then there exists an element $\gamma \in SL^\ddagger(2,H)$ such that $\gamma SL^{\ddagger_1}(2,H_1) \gamma^{-1} = SL^{\ddagger_2}(2,H_2)$, and $\gamma SL(2,F) \gamma^{-1} = SL(2,F)$.
		\item $(H_1, \ddagger_1) \cong (H_2, \ddagger_2)$.
	\end{enumerate}
\end{reptheorem}

\noindent Section \ref{Maximal Subgroups Section} introduces the subgroups $SL^\ddagger(2,\OO)$, and after proving some basic results about invariants of such groups, shows that they are maximal arithmetic groups.

\begin{reptheorem}{Maximal Arithmetic Groups}
Let $H$ be a rational quaternion algebra with orthogonal involution $\ddagger$. Let $\OO$ be a maximal $\ddagger$-order of $H$. Then $SL^\ddagger(2,\OO)$ is a maximal arithmetic subgroup of $SL^\ddagger(2,H)$.
\end{reptheorem}

\noindent In Section \ref{Isomorphisms Between Arithmetic Groups}, we produce some partial results regarding necessary and sufficient conditions for different groups of the form $SL^\ddagger(2,\OO)$ to be isomorphic to each other. As an initial necessary condition, we show that if $SL^{\ddagger_1}(2,\OO_1)$ is isomorphic to $SL^{\ddagger_2}(2,\OO_2)$, then $\OO_1$ and $\OO_2$ must be matrix isomorphic.

\begin{reptheorem}{Isomorphism Necessary Condition}
Let $H_1,H_2$ be quaternion algebras over a number field $K$, with orthogonal involutions $\ddagger_1$ and $\ddagger_2$. Let $\OO_1,\OO_2$ be maximal $\ddagger_1,\ddagger_2$-orders of $H_1$ and $H_2$, respectively. If the group $SL^{\ddagger_1}(2,\OO_1)$ is isomorphic to $SL^{\ddagger_2}(2,\OO_2)$, then $H_1 \cong H_2$ and $\Mat(2,\OO_1) \cong \Mat(2, \OO_2)$. In particular, $\disc(\OO_1) = \disc(\OO_2)$.
\end{reptheorem}

\noindent We further show that this result seems to be in some sense sharp---specifically, we produce an example of orders $\OO_1$ and $\OO_2$ such that $SL^{\ddagger_1}(2,\OO_1) \cong SL^{\ddagger_2}(2,\OO_2)$, but $\OO_1 \ncong \OO_2$. This unpleasant situation is not mirrored by the more restrictive notion of isomorphism introduced in Section \ref{Correspondence Isomorphism Section}; there, as with the algebraic group case, we see that the notion of isomorphism preserving involution exactly corresponds to conjugation preserving $SL(2,F)$.

\begin{reptheorem}{Special Isomorphism of Orders Is Equivalent to Special Isomorphism of Groups}
Let $H$ a quaternion algebra over a number field $K$ with orthogonal involution $\ddagger$. Let $\OO_1, \OO_2$ be maximal $\ddagger$-orders of $H$. Let $K'$ be a field extension of $K$ containing the splitting fields of $X^2 = \nrm(x)$ for all $x \in H$. Then the following are equivalent.
	\begin{enumerate}
	\item There exists $\gamma \in SL^\ddagger(2,H \otimes_K K')$ such that $\gamma SL^\ddagger(2,\OO_1) \gamma^{-1} = SL^\ddagger(2,\OO_2)$ and $\gamma SL(2,K) \gamma^{-1} = SL(2,K)$.
	\item $(\OO_1,\ddagger) \cong (\OO_2,\ddagger)$.
	\end{enumerate}
\end{reptheorem}

\noindent Finally, in Section \ref{Conjugacy Section}, we examine under what circumstances subgroups of the form $SL^\ddagger(2,\OO)$ are conjugate inside of the algebraic group in which they sit. Our main result is that this problem is actually local---that is, if the groups are conjugate over all of the localizations of the number field, then they are conjugate over the number field itself.

\begin{reptheorem}{Local Rigidity}
Let $H$ be a quaternion algebra over a number field $K$, with orthogonal involution	$\ddagger$. Let $\OO_1, \OO_2$ be maximal $\ddagger$-orders of $H$. Then $SL^\ddagger(2,\OO_1)$ is conjugate to $SL^\ddagger(2,\OO_2)$ in $SL^\ddagger(2,H)$ if and only if for every prime ideal $\mathfrak{p}$ of $\mathfrak{o}_K$, $SL^\ddagger(2,\OO_{1,\mathfrak{p}})$ is conjugate to $SL^\ddagger(2,\OO_{2,\mathfrak{p}})$ in $SL^\ddagger(2,H_\mathfrak{p})$.
\end{reptheorem}

\subsection*{Acknowledgements:}

The author would like to thank Joseph Quinn for a very productive conversation about invariants and isomorphisms of hyperbolic quotient manifolds, which inspired many of the approaches used in this paper.

\section{Preliminaries:}\label{Preliminaries}

Throughout this paper, we use the following standard conventions.

\

\begin{tabular}{ll}
	$\mathfrak{o}_F$ & the ring of integers of $F$, if $F$ is a local or global field. \\
	$H$ & a quaternion algebra over $F$. \\
	$x \mapsto \overline{x}$ & the standard involution on $H$. \\ 
	$\nrm(x) = x\overline{x}$ & the (reduced) norm map on $H$. \\
	$\tr(x) = x + \overline{x}$ & the (reduced) trace map on $H$.\\
	$H^0$ & the trace $0$ subspace of $H$. \\
	$H^\times$ & the unit group of $H$. \\
  $\left(\frac{a,b}{F}\right)$ & the quaternion algebra generated by $i,j$, where $i^2 = a$, $j^2 = b$, $ij = -ji$.
\end{tabular}

\

\noindent We shall also make use of the terminology from \cite{Sheydvasser2017}, wherein the author defined the notion of a maximal $\ddagger$-order. Briefly, for any quaternion algebra $H$, an involution of the first type is an $F$-linear map $\varphi: H \rightarrow H$ such that
	\begin{enumerate}
		\item $\varphi(xy) = \varphi(y)\varphi(x)$ and
		\item $\varphi(\varphi(x)) = x$.
	\end{enumerate}
	
\noindent The best-known involution of the first type is the standard involution of quaternion conjugation. Every other such involution is related to each other by conjugation, and are known as orthogonal involutions. We shall always denote such an involution by a superscript of $\ddagger$. The most common example that we shall use is
	\begin{align*}
	(w + xi + yj + zij)^\ddagger = w + xi + yj - zij.
	\end{align*}
	
\noindent Any such involution will act as the identity on a subspace of dimension $3$, which we shall denote by $H^+$, and act as multiplication by $-1$ on a subspace of dimension $1$, which we shall denote by $H^-$. A homomorphism of algebras with involution $f: (A, \ddagger_1) \rightarrow (B, \ddagger_2)$ is a ring homomorphism with the property that $f(x^{\ddagger_1}) = f(x)^{\ddagger_2}$. If the two algebras with involution are isomorphic, we shall write this as $(A, \ddagger_1) \cong (B, \ddagger_2)$. If $H$ is a quaternion algebra with an orthogonal involution $\ddagger$, then we may consider orders of $H$ that are closed under $\ddagger$---we call these $\ddagger$-orders. A maximal $\ddagger$-order is one that is not contained in any strictly larger $\ddagger$-order.

This notion of maximal $\ddagger$-orders was previously studied by Scharlau \cite{Scharlau1974} over central simple algebras and under a slightly different name. The primary new contribution of the author was a full classification of maximal $\ddagger$-orders over local fields of characteristic not $2$. This allowed an easy criterion to check whether a $\ddagger$-order is maximal.

\begin{theorem}\cite[Theorem 1.1]{Sheydvasser2017}\label{MainTheoremOrderPaper}
Given a quaternion algebra $H$ over a local or global field $F$ of characteristic not $2$ and with an orthogonal involution $\ddagger$, the maximal $\ddagger$-orders of $H$ correspond to Eichler orders of the form $\OO \cap \OO^\ddagger$ with discriminant
	\begin{align*}
	\disc(H) \cap \iota(\disc(\ddagger)).
	\end{align*}
\end{theorem}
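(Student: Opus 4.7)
The result is cited from the author's previous work, so here I merely outline how I would approach proving it from scratch. The natural strategy is a local-global argument: an order $\OO \subset H$ is a maximal $\ddagger$-order if and only if every localization $\OO_\mathfrak{p}$ is a maximal $\ddagger$-order of $H_\mathfrak{p}$, and both the Eichler property and the discriminant behave well under localization. So it suffices to establish the classification over each completion and then reassemble.

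The first step is to show that every maximal $\ddagger$-order has the form $\OO \cap \OO^\ddagger$ for some maximal order $\OO$. One direction is straightforward: if $\OO$ is any maximal order of $H$, then since $\ddagger$ is an anti-automorphism it sends $\OO$ to another maximal order, so $\OO \cap \OO^\ddagger$ is automatically a $\ddagger$-stable order and, by definition, an Eichler order. For the converse, given a maximal $\ddagger$-order $\mathcal{M}$, embed it into some maximal order $\OO$; then $\mathcal{M} = \mathcal{M}^\ddagger \subseteq \OO^\ddagger$, so $\mathcal{M} \subseteq \OO \cap \OO^\ddagger$, and maximality of $\mathcal{M}$ as a $\ddagger$-order forces equality.

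The main obstacle is the discriminant computation. Locally, one splits by whether $H_\mathfrak{p}$ is a division algebra or a matrix algebra. In the division case there is a unique maximal order $\OO_\mathfrak{p}$, which must be $\ddagger$-stable, so $\OO_\mathfrak{p} \cap \OO_\mathfrak{p}^\ddagger = \OO_\mathfrak{p}$ and the contribution to the discriminant comes entirely from $\disc(H)$. When $H_\mathfrak{p} \cong M_2(F_\mathfrak{p})$, the involution can be conjugated into a standard form determined by a single element, essentially a generator of $H^-$, and one computes how the valuation of that element controls the Eichler level of $\OO \cap \OO^\ddagger$. This valuation is exactly what gets encoded by $\iota(\disc(\ddagger))$, and assembling the local contributions via the product formula yields the global statement. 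The careful bookkeeping of the Eichler level in the split case is where the bulk of the work lies.
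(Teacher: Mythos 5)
The paper does not prove this statement; it is imported verbatim from \cite[Theorem 1.1]{Sheydvasser2017}, so there is no internal proof to compare against. Your skeleton --- reduce to localizations, show every maximal $\ddagger$-order is of the form $\OO \cap \OO^\ddagger$ by embedding into a maximal order and intersecting with its $\ddagger$-image, then split into the ramified and split local cases --- is the standard route and is sound as far as it goes. The converse step is fine because $(\OO \cap \OO^\ddagger)^\ddagger = \OO^\ddagger \cap \OO$, so the intersection really is a $\ddagger$-order, and maximality of $\mathcal{M}$ then forces equality.

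The gap is that the entire content of the theorem lives in the split local case, and your outline stops exactly where that content begins. Two things are missing. First, not every $\OO \cap \OO^\ddagger$ is a maximal $\ddagger$-order: with $\ddagger$ the transpose on $\Mat(2,F_\mathfrak{p})$ and $\OO = g \Mat(2,\mathfrak{o}_{F_\mathfrak{p}}) g^{-1}$ for $g = \mathrm{diag}(\pi,1)$, one gets the Eichler order of level $\mathfrak{p}^2$, which sits strictly inside the $\ddagger$-stable order $\Mat(2,\mathfrak{o}_{F_\mathfrak{p}})$. So the discriminant condition is doing real work, and you need an argument that every $\ddagger$-order is contained in one of the prescribed level and that any two orders of the form $\OO\cap\OO^\ddagger$ with equal discriminant cannot be properly nested. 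Second, and more importantly, you never explain why the level is governed by $\iota(\disc(\ddagger))$. The key fact is: writing $x^\ddagger = u\overline{x}u^{-1}$ with $u \in H^- \cap H^\times$, a maximal order $\OO$ of $\Mat(2,F_\mathfrak{p})$ is $\ddagger$-stable iff $u$ normalizes $\OO$ (since $\overline{\OO}=\OO$), iff $u \in F_\mathfrak{p}^\times \OO^\times$, iff $v(\nrm(u)) = v(-u^2)$ is even --- i.e.\ iff $\mathfrak{p} \nmid \iota(\disc(\ddagger))$. When the valuation is odd one must then show the maximal $\ddagger$-orders are exactly the $\ddagger$-stable Eichler orders of level $\mathfrak{p}$. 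This normalizer computation is the one genuine idea in the proof, and without it the statement ``the valuation of that element controls the Eichler level'' is an assertion, not an argument.
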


\noindent Here $\disc(H)$ is the product of the ramifying primes of $H$, $\disc(\ddagger) = x^2 \left(F^\times\right)^2$ for any non-zero $x \in H^-$, and
	\begin{align*}
	\iota: F^\times/\left(F^\times\right)^2 &\rightarrow \left\{\text{square-free ideals of } \mathfrak{o}_F\right\} \\
	[\lambda] &\mapsto \bigcup_{\lambda \in [\lambda] \cap \mathfrak{o}} \lambda \mathfrak{o}_F.
	\end{align*}
	
\noindent Given that the discriminants match, one may be tempted to ask whether all maximal $\ddagger$-orders are ring isomorphic. This is not so.

\begin{example}\label{Non Isomorphic Orders}
Define $(w + xi + yj + zij)^\ddagger = w + xi - yj + zij$. Then
	\begin{align*}
	\OO_1 &= \ZZ \oplus \ZZ i \oplus \ZZ \frac{1 + j}{2} \oplus \ZZ \frac{i + ij}{2} \subset \left(\frac{-1,-23}{\QQ}\right) \\
	\OO_2 &= \ZZ \oplus 3\ZZ i \oplus \ZZ \frac{1 + j}{2} \oplus \ZZ \frac{11 i + ij}{6} \subset \left(\frac{-1,-23}{\QQ}\right)
	\end{align*}
	
\noindent are both maximal $\ddagger$-orders by Theorem \ref{MainTheoremOrderPaper}, and in fact all of the localizations of $\OO_1$ and $\OO_2$ are isomorphic as algebras with involution. However, $\OO_1 \ncong \OO_2$, since $\OO_1^\times = \{1,-1,i,-i\}$, whereas $\OO_2^\times = \{1,-1\}$.
\end{example}

\section{$SL^\ddagger(2,H)$ is a Spin Group:}\label{Is a Spin Group Section}
With the conventions of Section \ref{Preliminaries}, we can define the set $SL^\ddagger(2,H)$ as follows.

\begin{definition}
Let $F$ be a field of characteristic not $2$. Let $H$ be a quaternion algebra over $F$, with orthogonal involution $\ddagger$. Then
	\begin{align*}
	SL^\ddagger(2,H) = \left\{\begin{pmatrix} a & b \\ c & d \end{pmatrix} \in \Mat(2,H) \middle| ab^\ddagger \in H^+, \ cd^\ddagger \in H^+, \ ad^\ddagger - bc^\ddagger = 1\right\}.
	\end{align*}
\end{definition}

\noindent That this is a group under standard matrix multiplication can either be checked by explicit computation, or by noting that
	\begin{align*}
	SL^\ddagger(2,H) = \left\{\gamma \in GL(2,H) \middle| \gamma \begin{pmatrix} 0 & ij \\ -ij & 0 \end{pmatrix} \overline{\gamma}^T = \begin{pmatrix} 0 & ij \\ -ij & 0 \end{pmatrix}\right\}.
	\end{align*}
	
\noindent In any case, one checks that inverses in this group are given by
	\begin{align*}
	\begin{pmatrix} a & b \\ c & d \end{pmatrix}^{-1} = \begin{pmatrix} d^\ddagger & -b^\ddagger \\ -c^\ddagger & a^\ddagger \end{pmatrix},
	\end{align*}
	
\noindent which will be important later. In the meantime, we start by proving that $SL^\ddagger(2,H)$ is a connected algebraic group.

\begin{lemma}\label{Proof of Connectedness}
If $F$ is a field of characteristic not equal to $2$, and $H$ is a quaternion algebra over $F$ with orthogonal involution $\ddagger$, then $SL^\ddagger(2,H)$ is a connected algebraic group.
\end{lemma}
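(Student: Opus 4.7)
The set $SL^\ddagger(2,H)$ is cut out in $\Mat(2,H)$ by the polynomial conditions $ab^\ddagger = ba^\ddagger$, $cd^\ddagger = dc^\ddagger$, and $ad^\ddagger - bc^\ddagger = 1$, so it is visibly an affine algebraic group over $F$. Since connectedness of an algebraic group is a geometric property, it suffices to verify it after base change to an algebraic closure $\bar F$.

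Over $\bar F$ the quaternion algebra splits as $H \otimes_F \bar F \cong M_2(\bar F)$, and after a linear change of coordinates every orthogonal involution on $M_2(\bar F)$ is conjugate to the matrix transpose $X^\ddagger = X^T$. Under this identification, I would view each $\gamma = \begin{pmatrix} A & B \\ C & D \end{pmatrix} \in SL^\ddagger(2, M_2(\bar F))$ as the block form of a single $4 \times 4$ matrix $\Gamma$, and compute
\[\Gamma \begin{pmatrix} 0 & I \\ -I & 0 \end{pmatrix} \Gamma^T = \begin{pmatrix} AB^T - BA^T & AD^T - BC^T \\ CB^T - DA^T & CD^T - DC^T \end{pmatrix}.\]
Matching this to $\begin{pmatrix} 0 & I \\ -I & 0 \end{pmatrix}$ recovers exactly the three defining conditions of $SL^\ddagger(2,H)$, the off-diagonal blocks giving the same condition up to transposition. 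Hence $SL^\ddagger(2,H) \otimes_F \bar F \cong Sp_4$, a standard example of a connected algebraic group, and so $SL^\ddagger(2,H)$ is geometrically connected.

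The main step to verify is the block-matrix calculation identifying the three defining relations with the single symplectic equation; this is routine. A more elementary alternative would be to build connected subgroups directly over $F$---the unipotent subgroups $U^\pm \cong H^+$ (parametrized by $b, c \in H^+$), the diagonal torus $T \cong H^\times$ (via $a \mapsto \operatorname{diag}(a,(a^\ddagger)^{-1})$), and the classical subgroup $SL(2,F)$ with its Weyl element $w$---and run a Bruhat-type decomposition $\gamma = u^- t u^+$ on the open locus where the entry $a$ is invertible, using $w$ to cover the locus where $c$ is invertible; however, covering the locus where both $a$ and $c$ fail to be invertible requires additional case analysis on polynomials like $\det(a + bc')$, so the geometric identification with $Sp_4$ over $\bar F$ is the cleaner route.
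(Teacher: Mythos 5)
Your argument is correct, but it takes a genuinely different route from the paper's. The paper works directly over $F$: it factors any element with $\nrm(d) \neq 0$ as (upper unipotent)$\cdot$(diagonal)$\cdot$(lower unipotent), shows that every diagonal element $\bigl(\begin{smallmatrix} u & 0 \\ 0 & (u^{-1})^\ddagger \end{smallmatrix}\bigr)$ is itself a product of elements of the unipotent subgroups $U^\ddagger(2,H) \cong L^\ddagger(2,H) \cong F^3$ (via writing $u = 1 + xy$ with $x,y \in H^+$), and then uses the fact that an open dense subset $\mathcal{U}$ satisfies $\mathcal{U}\cdot\mathcal{U} = SL^\ddagger(2,H)$ to conclude that these two connected subgroups generate the whole group, which is therefore connected. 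You instead base-change to $\overline{F}$, split $H$ as $\Mat(2,\overline{F})$ with $\ddagger$ conjugate to transposition, and identify the group with $\mathrm{Sp}_4(\overline{F})$; your block computation is right, the $(2,1)$ block is indeed the redundant transpose of the $(1,2)$ block, and the descent of connectedness from $\overline{F}$ to $F$ is the easy direction (surjectivity of $X_{\overline{F}} \to X$). Your route is arguably more efficient for the section as a whole, since $\mathrm{Sp}_4 \cong \mathrm{Spin}_5$ also delivers absolute almost simplicity and simple connectedness (Theorem \ref{Simply Connectedness}) at no extra cost; what it does not give is the $F$-rational statement that $U^\ddagger(2,H)$ and $L^\ddagger(2,H)$ generate $SL^\ddagger(2,H)$, which the paper reuses later (e.g., in the proof of Theorem \ref{Algebraic Group Exact Sequence}, where the image is shown to lie in $SO(q_H)$ by evaluating on these generators). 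The one step you should make explicit is the standard fact that all orthogonal involutions on $\Mat(2,\overline{F})$ are isomorphic to transposition: this holds because such involutions are classified up to isomorphism by $\disc(\ddagger) = \nrm(u) \in \overline{F}^\times/(\overline{F}^\times)^2$ for $u \in H^-$, which is trivial over an algebraically closed field.
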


\begin{proof}
Note that for any element of $SL^\ddagger(2,H)$, if $\nrm(d) \neq 0$, then
	\begin{align*}
	\begin{pmatrix} a & b \\ c & d \end{pmatrix} &= \begin{pmatrix} 1 & bd^{-1} \\ 0 & 1 \end{pmatrix} \begin{pmatrix} a - bd^{-1}c & 0 \\ c & d \end{pmatrix} \\
	&= \begin{pmatrix} 1 & bd^{-1} \\ 0 & 1 \end{pmatrix} \begin{pmatrix} a - bd^{-1}c & 0 \\ 0 & d \end{pmatrix} \begin{pmatrix} 1 & 0 \\ d^{-1}c & 1 \end{pmatrix}.
	\end{align*}
	
\noindent Furthermore, any element $u \in H^\times$ can be written in the form $u = 1 + xy$ for some $x,y \in H^+$, and so
	\begin{align*}
	\begin{pmatrix} 1 & 0 \\ -y(1 + xy)^{-1} & 1 \end{pmatrix} &\begin{pmatrix} 1 & x \\ 0 & 1 \end{pmatrix} \begin{pmatrix} 1 & 0 \\ y & 1 \end{pmatrix} \begin{pmatrix} 1 & -(1 + xy)^{-1}x \\ 0 & 1 \end{pmatrix} \\
	&= \begin{pmatrix} 1 & 0 \\ -y(1 + xy)^{-1} & 1 \end{pmatrix} \begin{pmatrix} 1 + xy & x \\ y & 1 \end{pmatrix} \begin{pmatrix} 1 & -(1 + xy)^{-1}x \\ 0 & 1 \end{pmatrix} \\
	&= \begin{pmatrix} 1 + xy & x \\ 0 & 1 - y(1 + xy)^{-1}x \end{pmatrix} \begin{pmatrix} 1 & -(1 + xy)^{-1}x \\ 0 & 1 \end{pmatrix} \\
	&= \begin{pmatrix} 1 + xy & 0 \\ 0 & 1 - y(1 + xy)^{-1}x \end{pmatrix} \\
	&= \begin{pmatrix} u & 0 \\ 0 & \left(u^{-1}\right)^\ddagger \end{pmatrix}.
	\end{align*}

\noindent Therefore, any element of $SL^\ddagger(2,H)$ with $\nrm(d) \neq 0$ can be written as a product of elements in
	\begin{align*}
	U^\ddagger(2,H) &= \left\{\begin{pmatrix} 1 & z \\ 0 & 1 \end{pmatrix} \middle| z \in H^+\right\} \\
	L^\ddagger(2,H) &= \left\{\begin{pmatrix} 1 & 0 \\ z & 1 \end{pmatrix} \middle| z \in H^+\right\}.
	\end{align*}

\noindent However, the collection $\mathcal{U}$ of matrices with $\nrm(d) \neq 0$ is an open, dense subset of $SL^\ddagger(2,H)$, hence $\mathcal{U} \cdot \mathcal{U} = SL^\ddagger(2,H)$, so in fact every element of $SL^\ddagger(2,H)$ can be written as a product of elements in $U^\ddagger(2,H)$ and $L^\ddagger(2,H)$. Clearly, $U^\ddagger(2,H) \cong L^\ddagger(2,H) \cong F \oplus F \oplus F$, so both of these groups are connected. But since $SL^\ddagger(2,H)$ is generated by these two subgroups, it is itself connected.
\end{proof}

This lemma has an immediate consequence---we can obtain an analog of the classic result that $SL(2,\CC)$ has a homomorphism into $SO^+(3,1)$. In our case, it is instead a homomorphism of $SL^\ddagger(2,H)$ into the connected component $SO^0$ of a special orthogonal group.

\begin{theorem}\label{Algebraic Group Exact Sequence}
Let $H$ be a quaternion algebra over a field $F$ not characteristic $2$, with orthogonal involution $\ddagger$. Define a quadratic form $q_H$ on $F^2 \oplus H^+$ by
	\begin{align*}
	q_H(s,t,z) = st - \nrm(z).
	\end{align*}
	
\noindent Then there is an exact sequence of algebraic groups
	\begin{align*}
	1 \rightarrow \left\{\pm I\right\} \rightarrow SL^\ddagger(2,H) \rightarrow SO^0(q_H) \rightarrow 1.
	\end{align*}
\end{theorem}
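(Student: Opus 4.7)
The plan is to realize the map to $SO^0(q_H)$ by the classical trick of twisted conjugation on a $5$-dimensional space of Hermitian matrices, then compute the kernel and invoke a dimension count for surjectivity.

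First, I identify $F^2 \oplus H^+$ with a subspace of $\Mat(2,H)$. Note that $\overline{\phantom{x}}$ preserves $H^+$, since $H^+ = F \oplus (H^+ \cap H^0)$ is a sum of eigenspaces of the standard involution, so the assignment
\[
(s,t,z) \longmapsto M_{s,t,z} := \begin{pmatrix} s & z \\ \bar z & t \end{pmatrix}
\]
gives an embedding of $F^2 \oplus H^+$ onto a subspace $V \subset \Mat(2,H)$; the ordinary $2\times 2$ determinant $\det M_{s,t,z} = st - z\bar z$ then recovers $q_H(s,t,z)$. I define
\[
\rho : SL^\ddagger(2,H) \to GL(V),\qquad \rho(\gamma)(M) = \gamma M \bar\gamma^T,
\]
which is a group homomorphism via $\overline{\gamma_1\gamma_2}^T = \bar\gamma_2^T \bar\gamma_1^T$.

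Second, I verify that $\rho(\gamma)$ actually preserves $V$ and $\det$. By Lemma \ref{Proof of Connectedness}, $SL^\ddagger(2,H)$ is generated by $U^\ddagger(2,H)$ and $L^\ddagger(2,H)$, so it suffices to check this on upper- and lower-triangular generators. For $\gamma = \begin{pmatrix} 1 & u \\ 0 & 1 \end{pmatrix}$ with $u \in H^+$, a direct computation gives
\[
\gamma M_{s,t,z} \bar\gamma^T = M_{s + \tr(u\bar z) + t\nrm(u),\ t,\ z + ut},
\]
using $\bar u \in H^+$ and $u\bar z + z\bar u = \tr(u\bar z) \in F$; the determinant is unchanged. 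The lower-triangular case is symmetric. This produces a morphism $\rho : SL^\ddagger(2,H) \to O(q_H)$, and Lemma \ref{Proof of Connectedness} forces its image into the identity component $SO^0(q_H)$.

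Third, I compute $\ker\rho$. Evaluating on $M_{1,0,0}$ and $M_{0,1,0}$ forces $\gamma$ to be diagonal with $\nrm(a) = \nrm(d) = 1$; then $\rho(\gamma)M_{0,0,1} = M_{0,0,1}$ gives $a\bar d = 1$, hence $d = a$, and combining with the $SL^\ddagger$ condition $ad^\ddagger = 1$ yields $a^\ddagger = \bar a$, placing $a \in F \oplus H^-$. Testing $\rho(\gamma)M_{0,0,y} = M_{0,0,y}$ for every $y \in H^+$ then becomes $aya^{-1} = y$; since $H^+$ generates $H$ as an $F$-algebra, this forces $a \in Z(H) = F$, whence $a = \pm 1$ from $\nrm(a) = 1$.

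Finally, surjectivity follows from a dimension count. I find $\dim SL^\ddagger(2,H) = 16 - 6 = 10$ (the conditions $ab^\ddagger, cd^\ddagger \in H^+$ each kill one direction in the $1$-dimensional $H^-$, and $ad^\ddagger - bc^\ddagger = 1$ imposes four scalar equations), while $\dim SO^0(q_H) = \binom{5}{2} = 10$. Since $\ker\rho$ is finite, the image has dimension $10$; since the image of a morphism of algebraic groups is closed and $SO^0(q_H)$ is connected, the image equals $SO^0(q_H)$. The main obstacle in the plan is the equivariance verification in the second step, which requires careful bookkeeping between $\ddagger$ and $\overline{\phantom{x}}$; the reduction to triangular generators via Lemma \ref{Proof of Connectedness} keeps the computation tractable.
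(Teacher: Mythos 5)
Your proposal is correct and follows essentially the same route as the paper: the identification of $F^2 \oplus H^+$ with Hermitian matrices $\left(\begin{smallmatrix} s & z \\ \overline{z} & t\end{smallmatrix}\right)$, the twisted conjugation action $M \mapsto \gamma M \overline{\gamma}^T$, the kernel computation via the same three test matrices, and the dimension count combined with connectedness (Lemma \ref{Proof of Connectedness}) for surjectivity onto the identity component. The only differences are cosmetic: you verify equivariance explicitly on the triangular generators where the paper asserts it directly, and you reorganize the kernel argument by first deducing $d = a$ from the action on $M_{0,0,1}$.
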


\begin{remark}
The special case where $K = \QQ$ and $H$ is positive definite was worked out in \cite{Sheydvasser2019}. We follow mostly the same argument.
\end{remark}

\begin{proof}[Proof of Theorem \ref{Algebraic Group Exact Sequence}]
Define a set
	\begin{align*}
	\mathcal{M}_H = \left\{M = \begin{pmatrix} a & b \\ c & d \end{pmatrix} \in \Mat(2,H)\middle|\overline{M}^T = M, \ ab^\ddagger, cd^\ddagger \in H^+\right\}.
	\end{align*}
	
\noindent It is easy to see that there is a bijection
	\begin{align*}
	F^2 \oplus H^+ &\rightarrow \mathcal{M}_H \\
	(s, t, z) &\mapsto \begin{pmatrix} s & z \\ \overline{z} & t \end{pmatrix},
	\end{align*}
	
\noindent taking the quadratic norm to the quasi-determinant---thus, we can identify these two sets. On the other hand, if $\gamma \in SL^\ddagger(2,H)$ and $M \in \mathcal{M}_H$, then it is easy to check that $\gamma M \overline{\gamma}^T \in \mathcal{M}_H$ as well, and has the same quasi-determinant as $\gamma$. Therefore, we have defined a morphism of algebraic groups $SL^\ddagger(2,H) \rightarrow O(q_H)$. Checking the action of the generators of $SL^\ddagger(2,H)$, one finds that the image is in fact inside $SO(q_H)$. For any element of the kernel,
	\begin{align*}
	\begin{pmatrix} 1 & 0 \\ 0 & 0 \end{pmatrix} &= \begin{pmatrix} a & b \\ c & d \end{pmatrix}\begin{pmatrix} 1 & 0 \\ 0 & 0 \end{pmatrix}\begin{pmatrix} \overline{a} & \overline{c} \\ \overline{b} & \overline{d} \end{pmatrix} \\
	&= \begin{pmatrix} a & 0 \\ c & 0 \end{pmatrix}\begin{pmatrix} \overline{a} & \overline{c} \\ \overline{b} & \overline{d} \end{pmatrix} \\
	&= \begin{pmatrix} \nrm(a) & a\overline{c} \\ c\overline{a} & \nrm(c) \end{pmatrix},
	\end{align*}

\noindent from which we conclude that $c = 0$ and $\nrm(a) = 1$. Similarly, the relation
	\begin{align*}
	\begin{pmatrix} 0 & 0 \\ 0 & 1 \end{pmatrix} &= \begin{pmatrix} a & b \\ 0 & d \end{pmatrix}\begin{pmatrix} 0 & 0 \\ 0 & 1 \end{pmatrix}\begin{pmatrix} \overline{a} & 0 \\ \overline{b} & \overline{d} \end{pmatrix} \\
	&= \begin{pmatrix} 0 & b \\ 0 & d \end{pmatrix}\begin{pmatrix} \overline{a} & 0 \\ \overline{b} & \overline{d} \end{pmatrix} \\
	&= \begin{pmatrix} \nrm(b) & b\overline{d} \\ d\overline{b} & \nrm(d) \end{pmatrix}
	\end{align*}
	
\noindent gives us that $b = 0$ and $\nrm(d) = 1$. Finally, we note that
	\begin{align*}
	\begin{pmatrix} 0 & z \\ \overline{z} & 0 \end{pmatrix} &= \begin{pmatrix} a & 0 \\ 0 & d \end{pmatrix}\begin{pmatrix} 0 & z \\ \overline{z} & 0 \end{pmatrix}\begin{pmatrix} \overline{a} & 0 \\ 0 & \overline{d} \end{pmatrix} \\
	&= \begin{pmatrix} 0 & az \\ d\overline{z} & 0 \end{pmatrix}\begin{pmatrix} \overline{a} & 0 \\ 0 & \overline{d} \end{pmatrix} \\
	&= \begin{pmatrix} 0 & az\overline{d} \\ d\overline{z}\,\overline{a} & 0 \end{pmatrix}
	\end{align*}
	
\noindent implies $az\overline{d} = z$ for all $z \in H^+$. Since $\nrm(d) = 1$, this is just to say that $az = zd$ for all $z \in H^+$, and since $ad^\ddagger = 1$, this is the same as saying that $az = z\overline{a^\ddagger}$ for all $z \in H^+$. It is easy to check this equation is satisfied only if $a \in F$, but since $\nrm(a) = 1$, we see that $a^2 = 1$, and therefore the kernel actually just consists of $\pm I$, as claimed. Since the kernel has dimension $0$, the dimension of the image is $\dim\left(SL^\ddagger(2,H)\right) = 10$, which is the same as the dimension of $SO(q_H)$. Furthermore, since $SL^\ddagger(2,H)$ is connected by Lemma \ref{Proof of Connectedness}, we conclude that the image of $SL^\ddagger(2,H)$ is the identity component of $SO(q_H)$.
\end{proof}

As a corollary, we get the desired result that $SL^\ddagger(2,H)$ is a spin group.

\begin{theorem}\label{Simply Connectedness}
Let $H$ be a quaternion algebra over a field $F$ not characteristic $2$, with orthogonal involution $\ddagger$. Then, as an algebraic group, $SL^\ddagger(2,H) \cong \text{Spin}(q_H)$. In particular, it is an absolutely almost simple, simply connected group.
\end{theorem}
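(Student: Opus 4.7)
The plan is to use the short exact sequence from Theorem \ref{Algebraic Group Exact Sequence} to identify $SL^\ddagger(2,H)$ with the simply connected cover of $SO^0(q_H)$, which is by definition $\text{Spin}(q_H)$. By that theorem and Lemma \ref{Proof of Connectedness}, we already have a connected central double cover $SL^\ddagger(2,H) \to SO^0(q_H)$ with central kernel $\mu_2 = \{\pm I\}$; what remains is to show that this cover is the universal (simply connected) one.

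First I would base change to an algebraic closure $\bar{F}$. Every quaternion algebra splits over $\bar{F}$, so $H \otimes_F \bar{F} \cong \Mat(2,\bar{F})$ and $q_H \otimes_F \bar{F}$ is, up to isometry, the unique non-degenerate quinary form over $\bar{F}$; in particular $SO^0(q_H) \otimes_F \bar{F} \cong SO_5$, whose simply connected cover is $\text{Spin}_5 \cong Sp_4$ (the exceptional isomorphism of type $B_2 = C_2$). Since the \'etale fundamental group of $SO_5$ is $\mu_2$ in characteristic not $2$, any connected double cover of $SO_5$ must be isomorphic to $\text{Spin}_5$. Applied to the base change of our cover, this yields $SL^\ddagger(2,H) \otimes_F \bar{F} \cong \text{Spin}_5$, which is in particular simply connected and absolutely almost simple.

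Since simple connectedness of a semisimple algebraic group is preserved and detected by passage to an algebraic closure, $SL^\ddagger(2,H)$ is simply connected as an $F$-group. I would then invoke the universal property of the simply connected cover $\text{Spin}(q_H) \to SO^0(q_H)$: any central isogeny to $SO^0(q_H)$ from a connected $F$-group factors through it uniquely. Applied to $SL^\ddagger(2,H) \to SO^0(q_H)$, this yields a morphism $\text{Spin}(q_H) \to SL^\ddagger(2,H)$ over $SO^0(q_H)$. Comparing kernels (both equal to $\mu_2$), this induced map has trivial kernel, and a dimension count (both sides are ten-dimensional) promotes it to an isomorphism. Absolute almost simplicity descends from the corresponding property of $\text{Spin}_5 \cong Sp_4$.

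The main obstacle is justifying the assertion that a connected double cover of $SO_5$ over $\bar{F}$ is necessarily $\text{Spin}_5$---this is standard in the structure theory of semisimple algebraic groups, but one wants a clean citation. A concrete alternative that avoids any appeal to this classification is to carry out an explicit computation in the split case: when $H = \Mat(2,\bar{F})$ and $\ddagger$ is (after change of basis) the transpose, one can verify directly from the defining relations of $SL^\ddagger(2,H)$ that this group coincides with $Sp_4(\bar{F})$ preserving a natural skew-symmetric form on $\bar{F}^4$, making simple connectedness manifest without invoking any classification of double covers.
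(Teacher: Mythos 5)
Your proof is correct and matches the paper's intended argument: the paper offers no explicit proof, presenting the theorem as an immediate corollary of Theorem \ref{Algebraic Group Exact Sequence}, and the implicit reasoning is exactly what you spell out --- a connected central double cover of $SO^0(q_H)$ with kernel $\{\pm I\}$ must be $\text{Spin}(q_H)$ because the fundamental group of a quinary special orthogonal group is $\mu_2$. Your base-change to $\overline{F}$, the descent of simple connectedness, and the universal-property/kernel-counting step are a sound way to make this rigorous, and your suggested explicit identification with $Sp_4$ in the split case is a reasonable alternative to citing the classification of central isogenies.
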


In the special case where $F$ is a global field, we can use the strong approximation theorem proved by Knesser and Platonov \cite{Kneser1965, Platonov1969} to get an immediate but very useful corollary of Theorem \ref{Simply Connectedness}.

\begin{corollary}\label{Strong Approximation}
Let $H$ be a quaternion algebra over a global field $F$ not characteristic $2$, with orthogonal involution $\ddagger$. Let $S$ be the set of infinite places of $F$. Then $SL^\ddagger(2,H)$ has strong approximation with respect to $S$.
\end{corollary}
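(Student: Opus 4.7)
The plan is to apply the strong approximation theorem of Kneser and Platonov, which asserts that an absolutely almost simple, simply connected algebraic group $G$ over a global field $F$ has strong approximation with respect to a non-empty finite set of places $S$ if and only if $\prod_{v \in S} G(F_v)$ is non-compact. Theorem \ref{Simply Connectedness} already supplies the hypotheses that $SL^\ddagger(2,H)$ is absolutely almost simple and simply connected, so the entire content of the corollary reduces to checking the non-compactness of the product of local groups.

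For this I would exploit the explicit shape of the quadratic form $q_H(s,t,z) = st - \nrm(z)$ from Theorem \ref{Algebraic Group Exact Sequence}: the variables $s$ and $t$ already provide a hyperbolic plane, so $q_H$ is isotropic over $F$ itself and therefore remains isotropic over every completion $F_v$. The spin group of an isotropic non-degenerate form of rank at least three is never compact over a local field. A concrete witness in the present setting is the unipotent subgroup $U^\ddagger(2,H)(F_v) \cong (F_v)^3$ produced in the proof of Lemma \ref{Proof of Connectedness}; it sits inside $SL^\ddagger(2,H)(F_v)$ as a closed, non-compact subgroup. In particular $SL^\ddagger(2,H)(F_v)$ is non-compact at every archimedean place $v$, so $\prod_{v \in S} SL^\ddagger(2,H)(F_v)$ is a fortiori non-compact whenever $S$ is non-empty.

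With both hypotheses verified, strong approximation with respect to $S$ follows at once from Kneser--Platonov. In truth there is no real obstacle: the form $q_H$ was built with a hyperbolic summand, which makes isotropy---and hence non-compactness at every completion---automatic, independent of the arithmetic of $H$ or the particular orthogonal involution $\ddagger$.
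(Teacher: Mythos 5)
Your proposal is correct and follows the same route the paper intends: the paper derives the corollary by citing the Kneser--Platonov strong approximation theorem together with Theorem \ref{Simply Connectedness}, offering no further argument. You additionally spell out the one hypothesis the paper leaves implicit---non-compactness of $\prod_{v\in S} SL^\ddagger(2,H)(F_v)$---and your verification via the hyperbolic summand of $q_H$ (equivalently, the non-compact unipotent subgroup $U^\ddagger(2,H)(F_v)\cong (F_v)^3$) is sound.
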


\section{Correspondence between Quaternion Algebras and Spin Groups:}\label{Correspondence Isomorphism Section}

Having established that all groups $SL^\ddagger(2,H)$ are spin groups, our next goal is to determine when two such groups are isomorphic, and which spin groups can be represented in such a manner. Our main result is the following.

\begin{theorem}\label{Correspondence between QAs and Spin Groups}
Let $F$ be a characteristic $0$ field. There there is a bijection
	\begin{align*}
	\left\{\substack{\text{Isomorphism classes of} \\ \text{quaternion algebras over } F}\right\} &\rightarrow \left\{\substack{\text{Isomorphism classes of} \\ \text{spin groups of indefinite,} \\ \text{quinary quadratic forms over } F} \right\} \\
	[H] & \mapsto \left[SL^\ddagger(2,H)\right].
	\end{align*}
\end{theorem}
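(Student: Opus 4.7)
By Theorem \ref{Simply Connectedness}, $SL^\ddagger(2,H) \cong \text{Spin}(q_H)$, where $q_H$ is a quinary quadratic form on $F^2 \oplus H^+$ containing the hyperbolic plane as an orthogonal summand, hence indefinite. The plan is to prove bijectivity of $[H] \mapsto [\text{Spin}(q_H)]$ by invoking the classification of simply connected absolutely almost simple algebraic groups of type $B_2 = C_2$ over a characteristic zero field, which parametrizes such groups by degree $4$ central simple algebras $A$ equipped with a symplectic involution $\tau$.

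For well-definedness, I would use the alternative description of $SL^\ddagger(2,H)$ given in the paragraph following its definition: it realizes the group as the unitary group of the pair $(\Mat(2,H), \sigma)$, where $\sigma(M) = J \overline{M}^T J^{-1}$ and $J$ is the skew-Hermitian matrix displayed there. This $\sigma$ is a symplectic involution of the first kind on the degree $4$ central simple algebra $\Mat(2,H)$. Since all symplectic involutions on a given CSA are inner-conjugate, the isomorphism class of the resulting algebraic group depends only on $\Mat(2,H)$, hence only on the Brauer class of $H$, which for quaternion algebras coincides with the isomorphism class of $H$ itself. In particular, different choices of $\ddagger$ (and of presentation of $H$) yield isomorphic algebraic groups.

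For surjectivity and injectivity, I would invoke the classification: simply connected absolutely almost simple groups of type $C_n$ over $F$ correspond bijectively to isomorphism classes of pairs $(A, \tau)$ with $A$ a degree $2n$ CSA and $\tau$ a symplectic involution, and (by uniqueness of symplectic involutions up to inner conjugation) further to isomorphism classes of the CSAs alone. For $n = 2$, every degree $4$ CSA is Morita equivalent to a division algebra of degree $1$, $2$, or $4$, so $A$ is one of $\Mat(4,F)$, $\Mat(2,H)$ for a quaternion algebra $H$, or a biquaternion division algebra; on the quadratic form side, these three cases correspond to quinary forms of Witt index $2$, $1$, and $0$ respectively. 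The indefinite hypothesis isolates exactly the first two, which are uniformly of the form $A \cong \Mat(2,H)$ for some quaternion algebra $H$ (with $H = M_2(F)$ in the split case). Surjectivity of $[H] \mapsto [SL^\ddagger(2,H)]$ follows by recognizing each such algebraic group as $SL^\ddagger(2,H)$ for a suitable $\ddagger$; injectivity follows since $\Mat(2,H_1) \cong \Mat(2,H_2)$ forces $[H_1] = [H_2]$ in the Brauer group of $F$ and hence $H_1 \cong H_2$.

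The main obstacle is the invocation of the classification of type $C_2$ forms itself, where the characteristic zero hypothesis (together with the separability of field extensions noted in the remark following the theorem) is needed in order to perform Galois-cohomological descent from a separable closure. A secondary technical point is verifying that any symplectic involution on $\Mat(2,H)$ is, up to isomorphism of algebras with involution, of the form $M \mapsto J \overline{M}^T J^{-1}$ for some $J$ induced by an orthogonal involution $\ddagger$ on $H$, so that the classification's bijection between forms of type $C_2$ and degree $4$ CSAs restricts exactly to the assignment $[H] \mapsto [SL^\ddagger(2,H)]$ stated in the theorem.
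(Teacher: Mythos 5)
Your overall strategy (classify type $B_2=C_2$ groups by degree $4$ central simple algebras with symplectic involution) is a legitimate alternative to the paper's argument, which instead passes to Lie algebras and then uses Witt cancellation and the trace-zero norm form. But there is a genuine gap at the step you lean on for both well-definedness and surjectivity: the claim that all symplectic involutions on a given degree $4$ central simple algebra are conjugate, so that the isomorphism class of the group depends only on $\Mat(2,H)$. This is false. Symplectic involutions on $\Mat(2,H)$ (for $H$ a quaternion division algebra) are the adjoint involutions of rank $2$ hermitian forms over $(H,\bar{\ })$, and they are conjugate exactly when the hermitian forms are similar; there can be several similarity classes. Already over $F=\RR$ with $H=H_\RR$, the definite form $\langle 1,1\rangle$ and the hyperbolic form $\langle 1,-1\rangle$ give non-conjugate symplectic involutions on $\Mat(2,H_\RR)$, with groups $\text{Spin}(5)$ and $\text{Spin}(4,1)$ respectively. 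Your argument as written would conclude these are isomorphic, i.e., it proves too much: the classification of type $C_2$ forms is by pairs $(A,\sigma)$ up to isomorphism of algebras with involution, not by $A$ alone, and the fibers of $(A,\sigma)\mapsto A$ are not singletons. (By contrast, injectivity of $[H]\mapsto[SL^\ddagger(2,H)]$ does survive, since an isomorphism of groups still forces $\Mat(2,H_1)\cong\Mat(2,H_2)$ and hence $H_1\cong H_2$ by Wedderburn.)

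The repair is exactly the point you flag as a ``secondary technical point'' at the end, which is in fact the crux: you must show that the symplectic involutions $M\mapsto J\overline{M}^TJ^{-1}$ arising from orthogonal involutions $\ddagger$ on $H$ all lie in a \emph{single} conjugacy class, namely the one adjoint to the hyperbolic (isotropic) hermitian form, and that the quinary forms corresponding to this class are precisely the indefinite ones --- this is where the hyperbolic-plane summand of $q_H$ and the indefiniteness hypothesis actually enter the classification, rather than being incidental observations. Without that identification, well-definedness (independence of $\ddagger$) and surjectivity onto the indefinite forms are both unproven. The paper avoids this entirely: it gets well-definedness by exhibiting an explicit conjugation in $GL(2,H)$ between $SL^{\ddagger_1}(2,H)$ and $SL^{\ddagger_2}(2,H)$, gets surjectivity by writing an indefinite form as $\langle 1,-1\rangle\oplus\langle 1,b,c\rangle$ and taking $H=\left(\frac{-b,-c}{F}\right)$, and gets injectivity from $\mathfrak{so}(q_1)\cong\mathfrak{so}(q_2)\Rightarrow q_1\cong\lambda q_2$ followed by Witt cancellation down to the trace-zero norm form, which determines $H$.
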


\begin{proof}
First, we must check that the given map is well-defined. In particular, we need to check that the choice of orthogonal involution $\ddagger$ does not change the isomorphism class of the spin group. We use the fact that all orthogonal involutions are conjugate---that is, if $\ddagger_1, \ddagger_2$ are orthogonal involutions on $H$, then there exists a $u \in H$ such that for all $x \in H$,
	\begin{align*}
	\left(u x u^{-1}\right)^{\ddagger_1} = u x^{\ddagger_2} u^{-1}.
	\end{align*}
	
\noindent But that means that $SL^{\ddagger_1}(2,H)$ and $SL^{\ddagger_2}(2,H)$ are conjugate inside of $GL(2,H)$, i.e. there is an isomorphism
	\begin{align*}
	SL^{\ddagger_2}(2,H) &\rightarrow SL^{\ddagger_1}(2,H) \\
	\begin{pmatrix} a & b \\ c & d \end{pmatrix} &\mapsto \begin{pmatrix} u & 0 \\ 0 & u \end{pmatrix} \begin{pmatrix} a & b \\ c & d \end{pmatrix}\begin{pmatrix} u^{-1} & 0 \\ 0 & u^{-1} \end{pmatrix}.
	\end{align*}
	
\noindent Therefore, the given map is well-defined. Next, we check that it is surjective. Choose any indefinite, quinary quadratic form $q$ over $F$. Since it is indefinite, we can decompose it as $\langle 1, -1 \rangle \oplus \langle a,b,c \rangle$, for some $a,b,c \in F^\times$. In fact, since scaling the quadratic form does not change the spin group, we can assume that the quadratic form is $\langle 1, -1 \rangle \oplus \langle 1,b,c \rangle$. In that case, it is clear that the image of
	\begin{align*}
	H = \left(\frac{-b,-c}{F}\right)
	\end{align*}
	
\noindent will be the desired spin group. So, we are finally left with checking that the map is injective, which is to say that if $SL^{\ddagger_1}(2,H_1)$ is isomorphic to $SL^{\ddagger_2}(2,H_2)$, then $H_1 \cong H_2$. The Lie algebras of $SL^{\ddagger_1}(2,H_1)$ and $SL^{\ddagger_2}(2,H_2)$ are $\mathfrak{so}(q_1)$ and $\mathfrak{so}(q_2)$, respectively, where
	\begin{align*}
	q_i: F^2 \oplus H_i^+ &\rightarrow F \\
	(s,t,z) &\mapsto -st + \nrm(z).
	\end{align*}
	
\noindent However, if $SL^{\ddagger_1}(2,H_1) \cong SL^{\ddagger_2}(2,H_2)$, then this isomorphism induces an isomorphism $\mathfrak{so}(q_1) \cong \mathfrak{so}(q_2)$, which can only happen if $q_1 \cong \lambda q_2$ for some $\lambda \in F^\times$ by the classification of Lie algebras. By Witt cancellation, it follows that $q_1' \cong \lambda q_2'$, where
	\begin{align*}
	q_i': H_i^+ &\rightarrow F \\
	z &\mapsto \nrm(z).
	\end{align*}
	
\noindent From this, it follows that $q_1'' \cong \lambda' q_2''$ for some $\lambda' \in F$, where
	\begin{align*}
	q_i'': H_i^0 &\rightarrow F \\
	z &\mapsto \nrm(z).
	\end{align*}
	
\noindent Since both $q_1''$ and $q_2''$ have discriminant $1$, in fact it must be true that $q_1'' \cong q_2''$. However, it is well-known that this quadratic form determines the quaternion algebra---that is, $H_1 \cong H_2$.
\end{proof}

In the proof of Theorem \ref{Correspondence between QAs and Spin Groups}, we only used that $SL^{\ddagger_1}(2,H)$ and $SL^{\ddagger_2}(2,H)$ are conjugate inside of $GL(2,H)$. However, we can prove something a little stronger.

\begin{theorem}\label{Isomorphisms and Embeddings}
Let $F$ be a characteristic $0$ field, and let $H_1$, $H_2$ be isomorphic quaternion algebras over $F$, with orthogonal involutions $\ddagger_1$ and $\ddagger_2$. Then there exists a quaternion algebra $H$ over a field $F' \supset F$ with an orthogonal involution $\ddagger$ such that $(H_1, \ddagger_1)$ and $(H_2, \ddagger_2)$ embed inside of $(H, \ddagger)$. Furthermore, for any $F'$ and $(H,\ddagger)$ satisfying this condition, the natural embeddings of $SL^{\ddagger_1}(2,H_1)$ and $SL^{\ddagger_2}(2,H_2)$ are conjugate inside of $SL^\ddagger(2,H)$.
\end{theorem}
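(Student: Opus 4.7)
My plan is to split the argument into the theorem's two assertions, using the classification of orthogonal involutions on a quaternion algebra together with a Skolem--Noether argument over the extended field. For the existence of $(F',H,\ddagger)$, I would fix an $F$-algebra isomorphism $\phi \colon H_1 \to H_2$ and transport $\ddagger_2$ to an orthogonal involution $\ddagger_2' := \phi^{-1} \ddagger_2 \phi$ on $H_1$. Letting $a_1, a_2' \in F^\times$ represent $\disc(\ddagger_1)$ and $\disc(\ddagger_2')$, I would set $F' := F(\sqrt{a_1 a_2'})$, so that these two classes coincide in $F'^\times/(F'^\times)^2$. Then $H := H_1 \otimes_F F'$ with $\ddagger$ the $F'$-linear extension of $\ddagger_1$ carries the tautological embedding $(H_1,\ddagger_1) \hookrightarrow (H,\ddagger)$. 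For $(H_2,\ddagger_2)$, observe that $H_2 \otimes F' \cong H_1 \otimes F' = H$ as $F'$-algebras and that the extended involution $\ddagger_2 \otimes \mathrm{id}$ has the same discriminant class as $\ddagger$ over $F'$; the classification of orthogonal involutions on a quaternion algebra then provides an isomorphism of $F'$-algebras with involution $(H_2 \otimes F', \ddagger_2 \otimes \mathrm{id}) \cong (H,\ddagger)$, and composition with the natural inclusion $H_2 \hookrightarrow H_2 \otimes F'$ yields the second embedding.

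For the conjugation part, suppose now that $(H,\ddagger)$ and embeddings $\iota_i$ are given. The plan is to find $v \in H^\times$ with $v \iota_1(H_1) v^{-1} = \iota_2(H_2)$ and $v v^\ddagger \in F'^\times$, and to convert this into an element of $SL^\ddagger(2,H)$. To produce such a $v$, I would extend each $\iota_i$ by $F'$-linearity to an $F'$-algebra-with-involution isomorphism $\tilde{\iota}_i \colon (H_i \otimes_F F', \tilde{\ddagger}_i) \xrightarrow{\sim} (H,\ddagger)$. Since both sources have the same discriminant over $F'$, equal to $\disc(\ddagger)$, there is an $F'$-algebra-with-involution isomorphism $\tilde{\phi}$ between them. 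The composition $\tilde{\iota}_2^{-1} \circ \tilde{\iota}_1 \circ \tilde{\phi}^{-1}$ is an $F'$-algebra-with-involution automorphism of $(H_2 \otimes F', \tilde{\ddagger}_2)$; by Skolem--Noether it equals $\mathrm{Inn}(h)$ for some $h \in (H_2 \otimes F')^\times$, and involution compatibility forces $h \cdot \tilde{\ddagger}_2(h) \in F'^\times$. Setting $v := \tilde{\iota}_2(h)$ gives the desired element.

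With $v$ in hand, I would take
\[
\gamma := \begin{pmatrix} v & 0 \\ 0 & v^{-\ddagger} \end{pmatrix},
\]
which lies in $SL^\ddagger(2,H)$ because $v \cdot (v^{-\ddagger})^\ddagger = v v^{-1} = 1$. Conjugating a matrix $M = \begin{pmatrix} a & b \\ c & d \end{pmatrix} \in \iota_1^\ast SL^{\ddagger_1}(2,H_1)$ by $\gamma$ yields a new matrix whose entries are $v a v^{-1},\ \lambda v b v^{-1},\ \lambda^{-1} v c v^{-1},\ v d v^{-1}$, where $\lambda := v v^\ddagger \in F'^\times$ is central in $H$. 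The diagonal entries land in $\iota_2(H_2) = v \iota_1(H_1) v^{-1}$ immediately, and the centrality of $\lambda$ lets one verify the $\ddagger$-conditions defining $SL^{\ddagger_2}(2,H_2)$ by direct computation.

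The hardest point will be ensuring the off-diagonal entries $\lambda v b v^{-1}$ and $\lambda^{-1} v c v^{-1}$ themselves lie in $\iota_2(H_2)$, which requires $\lambda$ to belong to $F^\times \subset \iota_2(H_2)$ rather than only to $F'^\times = Z(H)$, since $\iota_2(H_2) \cap F' = F$. I would address this by exploiting the freedom to rescale $v$ by elements of $F'^\times$ (which changes $\lambda$ by squares of $F'^\times$) and the freedom in the choice of $\tilde{\phi}$, in order to force the class of $\lambda$ in $F'^\times/(F'^\times)^2$ into the image of $F^\times$. This descent from $F'^\times$ to $F^\times$ is the genuine technical core of the proof, reflecting how the classification of quaternionic involutions interacts with the Galois structure of $F'/F$.
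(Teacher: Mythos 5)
Your existence argument is fine, and in fact more economical than the paper's: the paper simply takes $H = \Mat(2,\overline{F})$ with the transpose involution over the algebraic closure, whereas you get away with the quadratic extension $F(\sqrt{a_1a_2'})$ by invoking the classification of orthogonal involutions by their discriminant. The gap is in the conjugacy half, and it is structural rather than technical. The element $v$ you set out to find --- one with $v\,\iota_1(H_1)\,v^{-1} = \iota_2(H_2)$ \emph{and} $v^\ddagger v = \lambda \in F'^\times$ --- does not exist under the hypotheses of the theorem. Indeed, if $v^\ddagger v$ is central then $(vxv^{-1})^\ddagger = (v^\ddagger)^{-1}x^\ddagger v^\ddagger = v x^\ddagger v^{-1}$, so $x \mapsto vxv^{-1}$ restricted to $\iota_1(H_1)$ would be an isomorphism of algebras \emph{with involution} onto $\iota_2(H_2)$, i.e.\ it would force $(H_1,\ddagger_1)\cong(H_2,\ddagger_2)$. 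That is exactly the strictly stronger hypothesis of Theorem \ref{Algebraic Group Isomorphism Fixing Base Group}, not the hypothesis $H_1\cong H_2$ of the present theorem; so the deferred ``descent of $\lambda$ from $F'^\times$ to $F^\times$'' is not the only obstruction --- already the centrality of $v^\ddagger v$ is too much to ask for. There is also a slip upstream: the $v=\tilde{\iota}_2(h)$ produced by your Skolem--Noether step conjugates $\iota_1(H_1)$ onto $\tilde{\iota}_2(\tilde{\phi}(H_1))$, and since $\tilde{\phi}$ is only an $F'$-linear isomorphism it has no reason to carry the $F$-form $H_1$ onto the $F$-form $H_2$; a $\tilde{\phi}$ that respected both the involutions and the $F$-forms would again amount to $(H_1,\ddagger_1)\cong(H_2,\ddagger_2)$.

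The paper's proof escapes this by weakening the requirement on $u^\ddagger u$: it starts from a mere ring isomorphism $H_1\to H_2$, extends it by Skolem--Noether to $x\mapsto uxu^{-1}$ on $H$, and uses the $\ddagger$-stability of both subalgebras only to conclude that $u^\ddagger u$ normalizes $H_1$ and hence equals some $v\in H_1^+\cap H_1^\times$ --- an element of $H_1$, not of the center. The off-diagonal entries are then handled by absorbing this $v$ into $H_1$ before conjugating: $ubu^\ddagger = u(bv)u^{-1}\in uH_1u^{-1}=H_2$ and $(u^{-1})^\ddagger c u^{-1} = u(v^{-1}c)u^{-1}$, so $\mathrm{diag}(u,(u^{-1})^\ddagger)$ carries $SL^{\ddagger_1}(2,H_1)$ onto $SL^{\ddagger_2}(2,H_2)$ without $u^\ddagger u$ ever needing to be a scalar. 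To salvage your write-up, replace the requirement $v^\ddagger v\in F'^\times$ by $v^\ddagger v\in \iota_1(H_1)$ and redo the final matrix computation along these lines; as it stands, your plan would only prove the theorem in the special case covered by Theorem \ref{Algebraic Group Isomorphism Fixing Base Group}.
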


\begin{proof}
To see that such a quaternion algebra $H$ must exist, it suffices to take $\Mat(2,\overline{F})$ with transpose as the orthogonal involution, where $\overline{F}$ is the algebraic closure of $F$. On the other hand, given $(H,\ddagger)$ that $(H_1, \ddagger_1)$ and $(H_2, \ddagger_2)$ embed in, if there is an isomorphism between $H_1$ and $H_2$, that isomorphism can be extended to an automorphism of $H$, and therefore there must exist $u \in H$ such that this map is of the form $x \mapsto uxu^{-1}$. Since both $H_1$ and $H_2$ are closed under $\ddagger$, we have that
	\begin{align*}
	\left(uxu^{-1}\right)^\ddagger = \left(u^{-1}\right)^\ddagger x^\ddagger u^\ddagger \in u H_1 u^{-1},
	\end{align*}
	
\noindent hence
	\begin{align*}
	H_1 = u^\ddagger u H_1 \left(u^\ddagger u\right)^{-1},
	\end{align*}
	
\noindent which can only happen if $u^\ddagger u = v \in H_1^+ \cap H_1^\times$. However, proving conjugacy is now trivial---if
	\begin{align*}
	\begin{pmatrix} a & b \\ c & d \end{pmatrix} \in SL^{\ddagger_1}(2,H_1),
	\end{align*}
	
\noindent then
	\begin{align*}
	\begin{pmatrix} u & 0 \\ 0 & \left(u^{-1}\right)^\ddagger \end{pmatrix}\begin{pmatrix} a & b \\ c & d \end{pmatrix}\begin{pmatrix} u^{-1} & 0 \\ 0 & u^\ddagger \end{pmatrix} &= \begin{pmatrix} uau^{-1} & ubu^\ddagger \\ \left(u^{-1}\right)^\ddagger c u^{-1} & \left(u^{-1}\right)^\ddagger d u^\ddagger \end{pmatrix} \\ &= \begin{pmatrix} uxu^{-1} & u(bv)u^{-1} \\ u(v^{-1}c)u^{-1} & u(v^{-1}dv)u^{-1} \end{pmatrix} \in SL^{\ddagger_2}(2,H_2).
	\end{align*}
\end{proof}

It is clear that the isomorphism in constructed in the proof of Theorem \ref{Correspondence between QAs and Spin Groups} restricts to the identity on $SL(2,F)$. On the other hand, if we wish to only consider conjugacy inside of some larger group $SL^\ddagger(2,H)$, then this condition cannot be satisfied in general. This gives a connection between isomorphisms preserving the involution and conjugation preserving $SL(2,F)$.

\begin{theorem}\label{Algebraic Group Isomorphism Fixing Base Group}
Let $F$ be a characteristic $0$ field, and let $H_1$, $H_2$ be isomorphic quaternion algebras over $F$, with orthogonal involutions $\ddagger_1$ and $\ddagger_2$. The following two statements are equivalent.
	\begin{enumerate}
		\item If $H$ is a quaternion algebra over a field extension $F'$ with orthogonal involution $\ddagger$ such that $(H_1, \ddagger_1)$, $(H_2, \ddagger_2) \hookrightarrow (H,\ddagger)$, then there exists an element $\gamma \in SL^\ddagger(2,H)$ such that $\gamma SL^{\ddagger_1}(2,H_1) \gamma^{-1} = SL^{\ddagger_2}(2,H_2)$, and $\gamma SL(2,F) \gamma^{-1} = SL(2,F)$.
		\item $(H_1, \ddagger_1) \cong (H_2, \ddagger_2)$.
	\end{enumerate}
\end{theorem}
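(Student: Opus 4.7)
The plan is to prove the two implications separately.

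For (2) $\Rightarrow$ (1), start from an isomorphism $(H_1, \ddagger_1) \cong (H_2, \ddagger_2)$ of algebras with involution. Inside $(H, \ddagger)$ the underlying ring isomorphism $H_1 \to H_2$ extends to an automorphism of $H$ and so, by Skolem-Noether, is realized by conjugation $x \mapsto u x u^{-1}$ for some $u \in H^\times$. Preservation of the involution then forces $u^\ddagger u$ to centralize $H_1$, and after an appropriate rescaling of $u$ within its admissible class one may arrange $u^\ddagger u \in F^\times$. The element $\gamma = \begin{pmatrix} u & 0 \\ 0 & (u^{-1})^\ddagger \end{pmatrix}$ from the proof of Theorem \ref{Isomorphisms and Embeddings} then conjugates $SL^{\ddagger_1}(2, H_1)$ onto $SL^{\ddagger_2}(2, H_2)$, and direct computation shows that for $\begin{pmatrix} a & b \\ c & d \end{pmatrix} \in SL(2, F)$,
\[
\gamma \begin{pmatrix} a & b \\ c & d \end{pmatrix} \gamma^{-1} = \begin{pmatrix} a & b\,uu^\ddagger \\ c\,(uu^\ddagger)^{-1} & d \end{pmatrix}
\]
lies in $SL(2, F)$, using only that $uu^\ddagger = u^\ddagger u \in F^\times$.

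For (1) $\Rightarrow$ (2), write $\gamma = \begin{pmatrix} A & B \\ C & D \end{pmatrix}$. The first step is to translate condition (ii) into algebraic relations on the entries. Conjugating the upper unipotent $\begin{pmatrix} 1 & t \\ 0 & 1 \end{pmatrix}$, the lower unipotent $\begin{pmatrix} 1 & 0 \\ s & 1 \end{pmatrix}$, and the diagonal torus $\begin{pmatrix} \lambda & 0 \\ 0 & \lambda^{-1} \end{pmatrix}$ of $SL(2, F)$ by $\gamma$ and demanding the results to have entries in $F$, I conclude that every product $XY^\ddagger$ with $X, Y \in \{A, B, C, D\}$ lies in $F$.

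The second step is to factor $\gamma$. Replacing $\gamma$ by $s_2 \gamma s_1$ for $s_1, s_2 \in SL(2, F)$ preserves both hypotheses, and a brief rank argument in $H$ (using $AD^\ddagger - BC^\ddagger = 1$) lets us assume $AA^\ddagger \neq 0$. Setting $\alpha := AA^\ddagger \in F^\times$ gives $A^\ddagger = \alpha A^{-1}$; the relations $AB^\ddagger$, $CA^\ddagger$, $DA^\ddagger \in F$ then force $B, C, D \in F \cdot A$, so that $\gamma = \gamma_F \cdot \begin{pmatrix} A & 0 \\ 0 & A \end{pmatrix}$ for some $\gamma_F \in GL(2, F)$. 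Moreover $A^\ddagger A = \alpha$ as well, since $\alpha$ is central in $H$. Invoking (i), conjugation by $\gamma$ acts on matrix entries as the $F$-linear map by $\gamma_F$ (which preserves whichever $F$-subalgebra the entries lie in) followed by $x \mapsto A x A^{-1}$; hence $\gamma SL^{\ddagger_1}(2, H_1)\gamma^{-1} = SL^{\ddagger_2}(2, H_2)$ forces $A H_1 A^{-1} = H_2$. The resulting $F$-algebra isomorphism $\varphi : H_1 \to H_2$, $x \mapsto AxA^{-1}$, intertwines $\ddagger_1 = \ddagger|_{H_1}$ with $\ddagger_2 = \ddagger|_{H_2}$ iff $A^\ddagger A$ centralizes $H_1$, which is automatic because $A^\ddagger A = \alpha \in F^\times \subset Z(H)$. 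The main obstacle is the combinatorial step of extracting all relations $XY^\ddagger \in F$ and dealing with potential zero-divisor configurations before factoring $\gamma$; once the factorization is secured, the remaining deductions are essentially formal.
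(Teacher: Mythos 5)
Your overall architecture matches the paper's, but the $(1) \Rightarrow (2)$ direction takes a genuinely different route. The paper restricts conjugation by $\gamma$ to $\Mat(2,F)$ (the $F$-span of $SL(2,F)$), applies Skolem--Noether there to replace $\gamma$ by $\gamma'' = \gamma'\gamma$ centralizing $SL(2,F)$, and identifies $\gamma''$ as diagonal via centralizers of the diagonal torus; you instead extract the relations $XY^\ddagger \in F$ directly by conjugating the unipotents and the torus, normalize by $SL(2,F)$ on both sides so that $AA^\ddagger \neq 0$, and factor $\gamma = \gamma_F \cdot \operatorname{diag}(A,A)$. Both arguments arrive at the same structural conclusion, namely $\gamma \in GL(2,F)\cdot\left\{\operatorname{diag}(u,u)\right\}$ with $uu^\ddagger \in F^\times$, and your computations check out. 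The one step you wave at---the ``brief rank argument''---does close: if every translate $s_2\gamma s_1$ had vanishing $AA^\ddagger$, then polarizing the form $X \mapsto XX^\ddagger$ over the achievable rank-one coefficient vectors forces $AA^\ddagger = DD^\ddagger = 0$ together with $AD^\ddagger = 1/2$; but $XX^\ddagger \in F$ implies $XX^\ddagger = \pm\nrm(X)$, so $AA^\ddagger = 0$ makes $A$ singular and $AD^\ddagger$ cannot be a nonzero scalar. Your route is longer but more self-contained, and it makes explicit why $A^\ddagger A$ lands in $F^\times$ rather than merely in the center $F'$ of $H$.

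In the $(2) \Rightarrow (1)$ direction you follow the paper exactly and inherit its one real gap, which you at least make visible: the claim that $u$ can be rescaled so that $u^\ddagger u \in F^\times$. What involution-preservation actually gives is $u^\ddagger u \in C_H(H_1)^\times = F'^\times$, and rescaling $u$ by $c \in F'^\times$ changes $u^\ddagger u$ by $c^2$, so the class of $u^\ddagger u$ in $F'^\times/(F'^\times)^2$ is an invariant of the pair of embedded subalgebras (one can check $u^\ddagger u = \pm\nrm(u)$); nothing forces this class to meet $F^\times$. The paper's phrase ``this occurs if $u^\ddagger u = \lambda \in F^\times$'' elides the same point. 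To repair this direction one must either show that the class of $u^\ddagger u$ in $F'^\times/F^\times\left(F'^\times\right)^2$ is trivial for the given embeddings, or read the theorem as permitting a choice of the embeddings of $(H_i,\ddagger_i)$ into $(H,\ddagger)$, in which case both can be embedded with the same image and $\gamma = I$ works. As written, neither your argument nor the paper's establishes the rescaling.
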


\begin{proof}
Suppose that the first condition holds, so $\gamma SL^{\ddagger_1}(2,H_1) \gamma^{-1} = SL^{\ddagger_2}(2,H_2)$, and $\gamma SL(2,F) \gamma^{-1} = SL(2,F)$. Note that $M \mapsto \gamma M \gamma^{-1}$ can be extended to an automorphism of $\Mat(2,F)$; since $\Mat(2,F)$ is a central simple algebra, we see that there exists $\gamma' \in GL(2,F)$ such that $M \mapsto \gamma'\gamma M (\gamma' \gamma)^{-1}$ acts as the identity on $SL(2,F)$. Define $\gamma'' = \gamma'\gamma$; the map $M \mapsto \gamma'' M {\gamma''}^{-1}$ must map the subgroups
	\begin{align*}
	\mathcal{D}_{H_i,\ddagger_i} = \left\{M \in SL^{\ddagger_i}(2,H_i)\middle|M\begin{pmatrix} \lambda & 0 \\ 0 & \lambda^{-1} \end{pmatrix} = \begin{pmatrix} \lambda & 0 \\ 0 & \lambda^{-1} \end{pmatrix}M, \ \forall \lambda \in F^\times\right\}
	\end{align*}
	
\noindent to each other. But, of course,
	\begin{align*}
	\begin{pmatrix} \lambda & 0 \\ 0 & \lambda^{-1} \end{pmatrix} \begin{pmatrix} a & b \\ c & d \end{pmatrix} &= \begin{pmatrix} \lambda a & \lambda b \\ \lambda^{-1} c & \lambda^{-1} d \end{pmatrix} \\
	\begin{pmatrix} a & b \\ c & d \end{pmatrix} \begin{pmatrix} \lambda & 0 \\ 0 & \lambda^{-1} \end{pmatrix} &= \begin{pmatrix} \lambda a & \lambda^{-1} b \\ \lambda c & \lambda^{-1} d \end{pmatrix},
	\end{align*}
	
\noindent hence $\mathcal{D}_{H_i, \ddagger_i}$ is just the subgroup of diagonal matrices. It follows that
	\begin{align*}
	\gamma'' = \begin{pmatrix} u & 0 \\ 0 & \lambda\left(u^{-1}\right)^\ddagger \end{pmatrix}
	\end{align*}
	
\noindent for some $u \in H^\times$, $\lambda \in F^\times$. Furthermore,
	\begin{align*}
	\begin{pmatrix} u & 0 \\ 0 & \lambda\left(u^{-1}\right)^\ddagger \end{pmatrix} \begin{pmatrix} 1 & 1 \\ 0 & 1 \end{pmatrix} \begin{pmatrix} u^{-1} & 0 \\ 0 & \lambda^{-1}u^\ddagger \end{pmatrix} = \begin{pmatrix} 1 & \frac{uu^\ddagger}{\lambda} \\ 0 & 1 \end{pmatrix} = \begin{pmatrix} 1 & 1 \\ 0 & 1 \end{pmatrix},
	\end{align*}
	
\noindent whence $uu^\ddagger = \lambda \in F^\times$, which implies that the map $z \mapsto uzu^{-1}$ is an isomorphism of algebras with involution. In the opposite direction, if $\phi: H_1 \rightarrow H_2$ is an isomorphism of algebras with involution, then $\phi \otimes 1: H_1 \otimes_F F' \rightarrow H_2 \otimes_F F'$ is an automorphism of $H$ preserving $\ddagger$; since $H$ is a central simple algebra, there must exist a $u \in H^\times$ such that $\phi(z) = uzu^{-1}$. Since $\phi$ preserves the involution, we know that $\left(uzu^{-1}\right)^\ddagger = (u^{-1})^\ddagger z^\ddagger u^\ddagger = uz^\ddagger u^{-1}$. This occurs if $u^\ddagger u = \lambda \in F^\times$. It is then easy to check that if $a,b,c,d \in F$, then
	\begin{align*}
	\underbrace{\begin{pmatrix} u & 0 \\ 0 & \left(u^{-1}\right)^\ddagger\end{pmatrix}}_{:=\gamma}\begin{pmatrix} a & b \\ c & d \end{pmatrix}\begin{pmatrix} u^{-1} & 0 \\ 0 & u^\ddagger\end{pmatrix} &= \begin{pmatrix} a & \lambda b \\ c/\lambda & d \end{pmatrix},
	\end{align*}
	
\noindent hence $\gamma$ satisfies the conditions of the theorem.
\end{proof}

\section{Maximal Arithmetic Subgroups:}\label{Maximal Subgroups Section}
Having described when algebraic groups constructed from orders with involution are isomorphic and/or conjugate, we turn our attention to arithmetic groups. Specifically, let $H$ be a quaternion algebra with involution $\ddagger$, over a number field $K$. If $\OO$ is a $\ddagger$-order, then $SL^\ddagger(2,\OO)$ is an arithmetic subgroup of $SL^\ddagger(2,H)$. In the special case where $K = \QQ$, $SL^\ddagger(2,\OO)$ is either a lattice in $SO^0(4,1)$ or in $SO^0(3,2)$---we claim that in either case, it is actually a maximal arithmetic group, in the sense that it is not strictly contained inside any other arithmetic subgroup of $SL^\ddagger(2,H)$. To prove this, we shall make heavy use of the $\mathfrak{o}_K$-algebra generated by the elements $\Gamma \subset SL^\ddagger(2,H)$, which we shall denote by $\mathfrak{o}_K[\Gamma]$. First, we note that this is actually a group invariant.

\begin{lemma}\label{Ring is Invariant}
Let $H_1, H_2$ be rational quaternion algebras with orthogonal involutions $\ddagger_1, \ddagger_2$ over $K$. Let $\Gamma_1, \Gamma_2$ be lattices of $SL^{\ddagger_1}(2,H_1)$, $SL^{\ddagger_2}(2,H_2)$ such that they are Galois-closed and their centers are $\{\pm I\}$, where $I$ is the identity matrix. If $\Gamma_1$ and $\Gamma_2$ are isomorphic as groups, then $\mathfrak{o}_K[\Gamma_1]$ and $\mathfrak{o}_K[\Gamma_2]$ are isomorphic as rings.
\end{lemma}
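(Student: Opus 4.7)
The plan is to use strong rigidity of arithmetic lattices to extend $\phi : \Gamma_1 \to \Gamma_2$ to an isomorphism of algebraic groups, and then to realize that isomorphism as conjugation inside an ambient matrix algebra, so that it restricts to a ring isomorphism of the generated orders.

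Step one is Zariski density. By Theorem \ref{Simply Connectedness}, each $SL^{\ddagger_i}(2,H_i)$ is semisimple, simply connected, and absolutely almost simple, so Borel's density theorem shows that the lattice $\Gamma_i$ is Zariski dense in it. The tautological inclusion $SL^{\ddagger_i}(2,H_i) \hookrightarrow GL(2,H_i)$ is, after base change to a splitting field, the four-dimensional spin representation of $\text{Spin}_5$, which is absolutely irreducible. The double centralizer theorem then forces the $K$-subalgebra of $\Mat(2,H_i)$ generated by $\Gamma_i$ to be all of $\Mat(2,H_i)$, and $\mathfrak{o}_K[\Gamma_i]$ to be an order therein.

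Step two is extending $\phi$. The noncompact archimedean factors of $SL^{\ddagger_i}(2,H_i)(\RR)$ are $\text{Spin}(4,1)$ (real rank one, dimension four) or $\text{Spin}(3,2)$ (real rank two), so Mostow rigidity in the former case and Margulis superrigidity in the latter both apply and extend $\phi$ to a continuous, and hence algebraic, isomorphism of the ambient Lie groups. The Galois-closedness of $\Gamma_i$ lets this extension descend to a $K$-algebraic isomorphism $\Phi : SL^{\ddagger_1}(2,H_1) \to SL^{\ddagger_2}(2,H_2)$, and the assumption that the center of $\Gamma_i$ is $\{\pm I\}$ prevents $\phi$ from factoring through a nontrivial central quotient in which rigidity could fail.

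Step three is turning $\Phi$ into a ring isomorphism. Since the Dynkin diagram $B_2$ of $\text{Spin}_5$ has trivial outer automorphism group, Skolem--Noether applied to the four-dimensional spin representation realizes the base change $\Phi_{\bar K}$ as conjugation by an element $T \in GL_4(\bar K)$, unique up to scalars. The Galois-closedness of $\Gamma_i$ forces this $T$ to descend (modulo scalars) to a $K$-algebra isomorphism $\tilde\Phi : \Mat(2,H_1) \to \Mat(2,H_2)$ carrying $\Gamma_1$ to $\Gamma_2$; restricting $\tilde\Phi$ to $\mathfrak{o}_K[\Gamma_1]$ then gives the desired ring isomorphism onto $\mathfrak{o}_K[\Gamma_2]$. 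The main obstacle I expect lies in the pair of descent arguments in steps two and three: a priori, rigidity only produces an isomorphism over $\RR$ (or $\CC$), and Skolem--Noether only produces a conjugator over $\bar K$, and it is the Galois-closedness hypothesis---together with the triviality of outer automorphisms for $B_2$---that allows both to be descended to $K$.
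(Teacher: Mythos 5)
Your argument turns on the same engine as the paper's proof: Mostow rigidity applied to the images of the $\Gamma_i$ in the adjoint real Lie group, which is legitimate because the hypothesis on the centers forces $\phi(-I)=-I$, so $\phi$ descends to an isomorphism of lattices in $PSO^0(4,1)$ or $PSO^0(3,2)$. Where you diverge is in how much you extract from rigidity. The paper stops the moment it has an element $g$ of the real Lie group with $\overline{\Gamma}_2 = g\overline{\Gamma}_1 g^{-1}$: conjugation by a lift of $g$ inside $\Mat(2,H_i\otimes_\QQ\RR)$ carries $\Gamma_1$ onto $\Gamma_2$ and therefore restricts to a ring isomorphism $\mathfrak{o}_K[\Gamma_1]\to\mathfrak{o}_K[\Gamma_2]$; no rationality of the conjugator is needed for the statement as given. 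Your additional steps---Borel density, absolute irreducibility of the four-dimensional representation, Skolem--Noether over $\bar{K}$---buy a genuinely stronger conclusion, namely that the isomorphism is realized by a $K$-algebra isomorphism $\Mat(2,H_1)\to\Mat(2,H_2)$, which is closer to how the lemma is actually deployed in Theorem \ref{Isomorphism Necessary Condition}, and your appeal to the triviality of $\text{Out}$ for type $B_2$ cleanly handles a point the paper glosses over (the rigidity isomorphism is a priori only an automorphism of $G$, not an inner one). Three corrections, none fatal. First, Margulis superrigidity is not needed for the $(3,2)$ case: Mostow rigidity covers every simple Lie group other than $PSL(2,\RR)$, which is how the paper treats both signatures uniformly. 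Second, the descent in your steps two and three is not really powered by Galois-closedness: once $\Gamma_1$ generates $\Mat(2,H_1)$ as a $K$-algebra (your step one) and the conjugator carries $\Gamma_1$ onto $\Gamma_2\subset\Mat(2,H_2)$, conjugation automatically restricts to a $K$-algebra map between the two $K$-spans, with no separate descent argument required; as written, the mechanism by which ``Galois-closedness'' is supposed to act is left unexplained. Third, $\text{Spin}(4,1)$ has dimension ten, not four---four is the dimension of the associated hyperbolic space.
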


\begin{proof}
First, note that $\phi(-I) = -I$, since $-I$ is the unique non-identity element of the centers of $\Gamma_i$. Therefore, it induces an isomorphism $\overline{\phi}: \overline{\Gamma}_1 \rightarrow \overline{\Gamma}_2$ between the images of the $\Gamma_i$ inside $SL^{\ddagger_i}(2,H_i)/\{\pm I\}$. Note that
	\begin{align*}
	SL^{\ddagger_i}(2,H_i \otimes_\QQ \RR)/\{\pm I\} \cong \begin{cases} PSO^0(4,1) & \text{if } H_i \otimes_\QQ \RR \cong H_\RR \\ PSO^0(3,2) & \text{if } H_i \otimes_\QQ \RR \cong \Mat(2,\RR), \end{cases}
	\end{align*}

\noindent and so we can apply the Mostow rigidity theorem to conclude that both $\overline{\Gamma_1}$ and $\overline{\Gamma_2}$ can be viewed as being lattices of the same Lie group $G$, namely either $PSO^0(3,2)$ or $PSO^0(4,1)$. In either case, $G$ is simple, and therefore by Mostow rigidity $\overline{\Gamma_1}$ and $\overline{\Gamma_2}$ are conjugate in $G$. Letting $g \in G$ be such that $\overline{\Gamma_2} = g \overline{\Gamma_1}g^{-1}$, we have a well-defined ring homomorphism
	\begin{align*}
	\Phi: \mathfrak{o}_K[\Gamma_1] &\rightarrow \mathfrak{o}_K[\Gamma_2] \\
	M &\mapsto g M g^{-1}.
	\end{align*}
	
\noindent This map is clearly invertible, and so we are done.
\end{proof}

\begin{remark}
The use of the Mostow rigidity theorem in Lemma \ref{Ring is Invariant} makes clear why we restrict to the case where $H$ is a quaternion algebra over $\QQ$---over other number fields, the set of infinite places $\Omega_\infty$ has more than one element, and so rather than $SL^\ddagger(2,\OO)/\{\pm I\}$ injecting as a lattice into $PSO^0(4,1)$ or $PSO^0(3,2)$, it will instead inject into some Lie group
	\begin{align*}
	\bigtimes_{\nu \in \Omega_\infty} G_\nu,
	\end{align*}
	
\noindent where for every $\nu$, $G_\nu$ is either $PSO^0(4,1)$ or $PSO^0(3,2)$---in any case, this group is no longer simple, and so Mostow rigidity cannot be applied. This is not to say that the other results of this section are necessarily false over number fields $K \neq \QQ$, but we do not pursue this question here.
\end{remark}

Next, we prove that this $\mathfrak{o}_K$-algebra is especially nice for groups $SL^\ddagger(2,\OO)$.

\begin{lemma}\label{Group Ring}
Let $H$ be a quaternion algebra with orthogonal involution $\ddagger$ over a number field $K$. Let $\OO$ be a maximal $\ddagger$-order of $H$. Then $\mathfrak{o}_K\left[SL^\ddagger(2,\OO)\right] = \Mat(2,\OO)$.
\end{lemma}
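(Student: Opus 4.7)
The plan is to first extract matrix-unit idempotents from the algebra and apply the Peirce decomposition, then use strong approximation to reduce the remaining generation statement to a local verification.

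Since $1 \in \OO^+$, the matrices $\begin{pmatrix} 1 & 1 \\ 0 & 1 \end{pmatrix}$ and $\begin{pmatrix} 1 & 0 \\ 1 & 1 \end{pmatrix}$ lie in $SL^\ddagger(2,\OO)$; subtracting the identity yields the off-diagonal matrix units $E_{12}, E_{21} \in \mathfrak{o}_K[SL^\ddagger(2,\OO)]$, whence also $E_{11} = E_{12}E_{21}$ and $E_{22} = E_{21}E_{12}$. Writing $A_{ij}(z)$ for the elementary matrix with $z \in \OO$ in position $(i,j)$, the Peirce decomposition associated to the orthogonal idempotents $E_{11}, E_{22}$ presents the algebra as
\[
\mathfrak{o}_K[SL^\ddagger(2,\OO)] = A_{11}(S_{11}) \oplus A_{12}(S_{12}) \oplus A_{21}(S_{21}) \oplus A_{22}(S_{22}),
\]
for $\mathfrak{o}_K$-submodules $S_{ij} \subseteq \OO$ satisfying $S_{ij} \cdot S_{jk} \subseteq S_{ik}$ and $1 \in S_{11} \cap S_{22}$; the target is $S_{ij} = \OO$.

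Additional immediate containments come from two families of generators: the unipotent upper/lower triangular matrices give $\OO^+ \subseteq S_{12} \cap S_{21}$, and the diagonal matrices $\begin{pmatrix} u & 0 \\ 0 & (u^{-1})^\ddagger \end{pmatrix}$, which lie in $SL^\ddagger(2,\OO)$ for every $u \in \OO^\times$ by the $\ddagger$-closure of $\OO$, give $\OO^\times \subseteq S_{11} \cap S_{22}$. Once $S_{11} = \OO$ has been established, the Peirce product relations force $S_{12} \supseteq S_{11} \cdot \OO^+ \supseteq \OO$, then $S_{22} \supseteq S_{21} \cdot S_{12} \supseteq \OO$, and finally $S_{21} \supseteq S_{22} \cdot \OO^+ \supseteq \OO$. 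So the whole statement reduces to establishing $S_{11} = \OO$.

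I would establish this by localizing. For each finite prime $\mathfrak{p}$, $(S_{11})_\mathfrak{p}$ is a finitely generated $\mathfrak{o}_{K,\mathfrak{p}}$-submodule of $\OO_\mathfrak{p}$, hence closed in the $\mathfrak{p}$-adic topology, and by Corollary \ref{Strong Approximation} the image of $SL^\ddagger(2,\OO)$ is dense in $SL^\ddagger(2,\OO_\mathfrak{p})$. It therefore suffices to show that every $\alpha \in \OO_\mathfrak{p}$ arises as the $(1,1)$-entry of some $\begin{pmatrix} \alpha & b \\ c & d \end{pmatrix} \in SL^\ddagger(2,\OO_\mathfrak{p})$. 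Using Theorem \ref{MainTheoremOrderPaper}, $\OO_\mathfrak{p}$ is either the unique maximal order of a local division algebra or an Eichler order in $\Mat(2,K_\mathfrak{p})$, and in either case one constructs $b, c, d \in \OO_\mathfrak{p}$ with $\alpha b^\ddagger, cd^\ddagger \in H^+$ and $\alpha d^\ddagger - bc^\ddagger = 1$ from the explicit local structure. I expect the main obstacle to be precisely this last step: handling non-unit $\alpha$ requires choosing $b, c \in \OO_\mathfrak{p}^+$ with valuations arranged so that $1 + bc^\ddagger$ is right-divisible by $\alpha^\ddagger$ inside $\OO_\mathfrak{p}$, and the argument must be carried out separately depending on the ramification of $H_\mathfrak{p}$ and on the level of the Eichler order permitted by Theorem \ref{MainTheoremOrderPaper}.
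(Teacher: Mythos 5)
Your framework is sound and in fact runs parallel to the paper's: both arguments reduce the lemma, via strong approximation, to a purely local statement about which elements of $\OO_\mathfrak{p}$ occur as $(1,1)$-entries of $SL^\ddagger(2,\OO_\mathfrak{p})$. The Peirce decomposition, the chain of containments reducing everything to $S_{11}=\OO$, and the closedness of $(S_{11})_\mathfrak{p}$ are all fine. The problem is that the one step carrying the actual content of the lemma is precisely the step you defer: completing an arbitrary, possibly non-unit $\alpha\in\OO_\mathfrak{p}$ to a matrix $\bigl(\begin{smallmatrix} \alpha & b \\ c & d\end{smallmatrix}\bigr)\in SL^\ddagger(2,\OO_\mathfrak{p})$. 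You correctly identify this as the main obstacle, but the valuation-juggling you sketch (arranging $1+bc^\ddagger$ to be right-divisible by $\alpha^\ddagger$ in $\OO_\mathfrak{p}$, case-split on ramification and Eichler level) is not carried out, and it is genuinely delicate; as written the proof stops where the difficulty begins.

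The repair is to notice that you have posed a harder problem than you need to solve, and that you already hold the ingredients for the easier one. Since $\mathfrak{o}_K\subseteq S_{11}$ and, as you observed via the diagonal matrices $\bigl(\begin{smallmatrix} u & 0 \\ 0 & (u^{-1})^\ddagger\end{smallmatrix}\bigr)$, $\OO_\mathfrak{p}^\times\subseteq (S_{11})_\mathfrak{p}$, it suffices to show that every $\alpha\in\OO_\mathfrak{p}$ is congruent modulo $\mathfrak{o}_{K_\mathfrak{p}}$ to a unit of $\OO_\mathfrak{p}$; then $\alpha=\lambda+u\in\mathfrak{o}_{K_\mathfrak{p}}+(S_{11})_\mathfrak{p}$ and no completion of a non-unit is ever required. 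This is exactly the paper's local step: it produces the diagonal element $\bigl(\begin{smallmatrix} \lambda+z & 0 \\ 0 & (\lambda+z^\ddagger)^{-1}\end{smallmatrix}\bigr)$ with $\lambda$ chosen so that $\lambda+z\in\OO_\mathfrak{p}^\times$, i.e.\ so that the monic quadratic $\nrm(\lambda+z)=\lambda^2+\tr(z)\lambda+\nrm(z)$ is a unit of $\mathfrak{o}_{K_\mathfrak{p}}$. (Even this requires a word of care: when the residue field is $\mathbb{F}_2$ that quadratic can vanish at both residues --- e.g.\ $z$ an idempotent in $\Mat(2,\ZZ_2)$ --- so at such primes one must realize the coset of $z$ by a non-diagonal element; the paper elides this point.) Substituting this unit-shifting observation for your completion problem closes the argument.
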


\begin{proof}
Obviously, $\mathfrak{o}_K\left[SL^\ddagger(2,\OO)\right]$ is contained inside of $\Mat(2,\OO)$, so it shall suffice to show that every element of $\Mat(2,\OO)$ can be is contained in $\mathfrak{o}_K\left[SL^\ddagger(2,\OO)\right]$. Since $\mathfrak{o}_K \subset \OO$, we know that $\Mat(2,\mathfrak{o}_K) \subset \mathfrak{o}_K\left[SL^\ddagger(2,\OO)\right]$. Consider the projection
	\begin{align*}
	\Psi: SL^\ddagger(2,\OO) &\rightarrow \OO/\mathfrak{o}_K \\
	\begin{pmatrix} a & b \\ c & d \end{pmatrix} &\mapsto a + \mathfrak{o}_K.
	\end{align*}
	
\noindent It is easy to see that for every prime ideal $\mathfrak{p} \subset \mathfrak{o}_K$, the corresponding map
	\begin{align*}
	\Psi_\mathfrak{p}: SL^\ddagger(2,\OO_\mathfrak{p}) &\rightarrow \OO_\mathfrak{p}/\mathfrak{o}_{K_\mathfrak{p}} \\
	\begin{pmatrix} a & b \\ c & d \end{pmatrix} &\mapsto a + \mathfrak{o}_{K_\mathfrak{p}}
	\end{align*}
	
\noindent is surjective---this is because for any $z \in \OO_\mathfrak{p}$, there exists some $\lambda \in \mathfrak{o}_{K_\mathfrak{p}}$ such that $\lambda + z \in \OO_\mathfrak{p}^\times$, in which case we have
	\begin{align*}
	\underbrace{\begin{pmatrix} \lambda + z & 0 \\ 0 & \left(\lambda + z^\ddagger\right)^{-1} \end{pmatrix}}_{\in SL^\ddagger(2,\OO_\mathfrak{p})} \mapsto z + \mathfrak{o}_{K_\mathfrak{p}}.
	\end{align*}
	
\noindent However, by Corollary \ref{Strong Approximation}, we can apply strong approximation to $SL^\ddagger(2,\OO)$, from which it follows that
	\begin{align*}
	\Psi\left(SL^\ddagger(2,\OO)\right) = \bigcap_{\mathfrak{p}} \Psi_{\mathfrak{p}}\left(SL^\ddagger(2,\OO_\mathfrak{p})\right) = \OO/\mathfrak{o}_K.
	\end{align*}

\noindent Ergo, for any $z \in \OO$, there exists an element $\gamma \in SL^\ddagger(2,\OO)$ and a $\lambda \in \mathfrak{o}_K$ such that
		\begin{align*}
		\begin{pmatrix} 1 & 0 \\ 0 & 0 \end{pmatrix}\gamma \begin{pmatrix} 1 & 0 \\ 0 & 0 \end{pmatrix} = \begin{pmatrix} \lambda + z & 0 \\ 0 & 0 \end{pmatrix} \in \mathfrak{o}_K\left[SL^\ddagger(2,\OO)\right].
		\end{align*}
		
\noindent Since the group ring also contains $\Mat(2,\mathfrak{o})$, we can conclude that it actually contains all of $\Mat(2,\OO)$.
\end{proof}

Finally, we shall need to know that the ring generated by an arithmetic group is an order.

\begin{lemma}\label{Lattices beget lattices}
Let $H$ be a rational quaternion algebra with orthogonal involution $\ddagger$. Let $\Gamma$ be an arithmetic subgroup of $SL^\ddagger(2,H)$. Then $\ZZ\left[\Gamma\right]$ is an order of the central simple algebra $\Mat(2,H)$.
\end{lemma}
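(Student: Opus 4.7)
My plan is to pass to the finite-index sublattice $\Gamma_0 = \Gamma \cap SL^\ddagger(2,\OO)$, where $\OO$ is a maximal $\ddagger$-order of $H$ (which exists by Theorem \ref{MainTheoremOrderPaper}). By definition of arithmeticity, $\Gamma_0$ has finite index in both $\Gamma$ and $SL^\ddagger(2,\OO)$. To prove $\ZZ[\Gamma]$ is an order of $\Mat(2,H)$, I need two things: that $\ZZ[\Gamma]$ is finitely generated as a $\ZZ$-module, and that its $\QQ$-span is all of $\Mat(2,H)$.

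For finite generation, I would first observe that since $\Gamma_0 \subset SL^\ddagger(2,\OO) \subset \Mat(2,\OO)$ and the latter is already closed under multiplication, $\ZZ[\Gamma_0] \subset \Mat(2,\OO)$. As $\Mat(2,\OO)$ is a free $\ZZ$-module of rank $16$ and $\ZZ$ is Noetherian, $\ZZ[\Gamma_0]$ is finitely generated. Next, I would pick coset representatives $\gamma_1, \ldots, \gamma_n$ of $\Gamma_0$ in $\Gamma$. Since $\Gamma$ is itself closed under multiplication, $\ZZ[\Gamma]$ is simply the $\ZZ$-linear span of $\Gamma$; decomposing $\Gamma = \bigsqcup_i \gamma_i \Gamma_0$ gives $\ZZ[\Gamma] = \sum_{i=1}^n \gamma_i\, \ZZ[\Gamma_0]$, a finite sum of finitely generated $\ZZ$-modules.

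For the $\QQ$-span, I would invoke Borel density. By Theorem \ref{Simply Connectedness}, $SL^\ddagger(2,H)$ is a simply connected, absolutely almost simple $\QQ$-algebraic group; base-changing to $\RR$ and applying Theorem \ref{Algebraic Group Exact Sequence} identifies $SL^\ddagger(2,H)(\RR)$ with either $\text{Spin}(4,1)$ or $\text{Spin}(3,2)$, which is simple and non-compact. By Borel--Harish-Chandra, $\Gamma$ is a lattice in this Lie group, and by Borel density it is Zariski-dense in $SL^\ddagger(2,H)$ as a real algebraic group. Writing $V$ for the $\QQ$-linear span of $\Gamma$ inside $\Mat(2,H)$, the real subspace $V \otimes_\QQ \RR$ is Zariski-closed in $\Mat(2, H \otimes_\QQ \RR)$ and contains $\Gamma$, so by Zariski density it must contain $SL^\ddagger(2,H)(\RR)$. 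Intersecting back with $\Mat(2,H)$ yields $V \supseteq SL^\ddagger(2,H)(\QQ)$, and in particular $V \supseteq SL^\ddagger(2,\OO)$. By Lemma \ref{Group Ring}, $\ZZ[SL^\ddagger(2,\OO)] = \Mat(2,\OO)$, whose $\QQ$-span is $\Mat(2,H)$; so $V = \Mat(2,H)$, as required.

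I expect the main obstacle to be the Zariski-density step: one must verify carefully that the hypotheses of Borel density are met (in particular that $SL^\ddagger(2,H)(\RR)$ is simple and non-compact in both the ramified and split cases of $H$), and then translate real Zariski density back into a statement about the $\QQ$-linear span. Once this is done, everything else amounts to bookkeeping with cosets and the application of Lemma \ref{Group Ring}.
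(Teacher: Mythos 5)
Your proof is correct, but it takes a different route from the paper's. For finite generation, the paper does not intersect $\Gamma$ with $SL^\ddagger(2,\OO)$; it instead invokes the definition of arithmeticity in the form of a morphism $\Psi : \Gamma \rightarrow SL(l,\ZZ)$ with finite kernel, observes that $\ZZ[\Psi(\Gamma)] \subseteq \Mat(l,\ZZ)$ is a finitely generated $\ZZ$-module, and transfers this back to $\ZZ[\Gamma]$. Your coset argument --- bounding $\ZZ[\Gamma_0]$ inside the rank-$16$ lattice $\Mat(2,\OO)$ and writing $\ZZ[\Gamma] = \sum_i \gamma_i \ZZ[\Gamma_0]$ over finitely many coset representatives --- is more concrete and arguably more robust: the paper's transfer step implicitly requires $\Psi$ to interact well with the ambient linear structure, whereas your version only uses that $\Gamma$ is a group commensurable with $SL^\ddagger(2,\OO)$ and that $\ZZ$ is Noetherian. (Two small remarks: you do not need $\OO$ to be a \emph{maximal} $\ddagger$-order here, any $\ddagger$-order suffices; and existence is by a Zorn's lemma argument rather than by the classification theorem you cite.) The other genuine difference is that you explicitly verify fullness, i.e.\ that the $\QQ$-span of $\ZZ[\Gamma]$ is all of $\Mat(2,H)$, via Borel--Harish-Chandra and Borel density; the paper's proof passes over this point, concluding directly that a finitely generated subring of $\Mat(2,H)$ is an order. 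Fullness is part of the standard definition of an order in a central simple algebra and is needed for the lemma as stated for an arbitrary arithmetic $\Gamma$ (in the paper's application $\Gamma$ contains $SL^\ddagger(2,\OO)$, so there it is immediate from Lemma \ref{Group Ring}), so your extra step is a genuine improvement rather than padding. The density argument itself is sound: $SL^\ddagger(2,H)(\RR)$ is $\text{Spin}(4,1)$ or $\text{Spin}(3,2)$, noncompact and almost simple in either case, and intersecting the real Zariski closure with the rational points of the linear span is handled correctly.
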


\begin{proof}
First, note that since $\Gamma \subset SL^\ddagger(2,H)$, $\ZZ\left[\Gamma\right] \subset \ZZ\left[SL^\ddagger(2,H)\right] = \Mat(2,H)$. Since $\Gamma$ is an arithmetic group, for some integer $l$, there is a morphism $\Psi:\Gamma \rightarrow SL(l,\ZZ)$ with a finite kernel. It is easy to see that $\ZZ\left[SL(l,\ZZ)\right] = \Mat(l,\ZZ)$ is a finitely-generated, Noetherian $\ZZ$-module. Therefore, the sub-module $\ZZ\left[\Psi(\Gamma)\right]$ is finitely-generated. This, in turn, means that $\ZZ\left[\Gamma\right]$ is finitely-generated as an $\ZZ$-module. Since it is a subring of the finite-dimensional algebra $\Mat(2,H)$, it is therefore an order.
\end{proof}

With these three lemmas out of the way, we can prove the maximality of the groups $SL^\ddagger(2,\OO)$ for rational quaternion algebras.

\begin{theorem}\label{Maximal Arithmetic Groups}
Let $H$ be a rational quaternion algebra with orthogonal involution $\ddagger$. Let $\OO$ be a maximal $\ddagger$-order of $H$. Then $SL^\ddagger(2,\OO)$ is a maximal arithmetic subgroup of $SL^\ddagger(2,H)$.
\end{theorem}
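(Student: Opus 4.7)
The strategy is to show that any arithmetic subgroup $\Gamma \supseteq SL^\ddagger(2,\OO)$ must in fact equal $SL^\ddagger(2,\OO)$. Set $\Lambda := \ZZ[\Gamma]$. By Lemma \ref{Lattices beget lattices} this is an order of $\Mat(2,H)$, and by Lemma \ref{Group Ring} the inclusion $SL^\ddagger(2,\OO) \subseteq \Gamma$ gives $\Mat(2,\OO) \subseteq \Lambda$. The plan is to prove $\Lambda = \Mat(2,\OO)$, for then $\Gamma \subseteq SL^\ddagger(2,H) \cap \Mat(2,\OO) = SL^\ddagger(2,\OO)$.

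The first step is to extract an order of $H$ out of $\Lambda$. Let $e_{11} = \begin{pmatrix} 1 & 0 \\ 0 & 0 \end{pmatrix}$, $e_{22} = \begin{pmatrix} 0 & 0 \\ 0 & 1 \end{pmatrix}$, and $w = \begin{pmatrix} 0 & 1 \\ -1 & 0 \end{pmatrix}$; all three lie in $\Mat(2,\ZZ) \subseteq \Lambda$, and $w^{-1} = -w \in \Lambda$. Define
\[\OO' := \left\{x \in H \,\middle|\, \begin{pmatrix} x & 0 \\ 0 & 0 \end{pmatrix} \in \Lambda\right\}.\]
Since $\Lambda$ is a ring containing $e_{11}$, $\OO'$ is a subring of $H$; since $\Lambda$ is a $\ZZ$-lattice, so is $\OO'$; and $\OO' \supseteq \OO$ because $\Mat(2,\OO) \subseteq \Lambda$. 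Hence $\OO'$ is an order of $H$ containing $\OO$. For any $\lambda = \begin{pmatrix} a & b \\ c & d \end{pmatrix} \in \Lambda$, the four products $e_{11}\lambda e_{11}$, $e_{11}\lambda e_{22}\,w^{-1}$, $w\,e_{22}\lambda e_{11}$, and $w\,e_{22}\lambda e_{22}\,w^{-1}$ all lie in $\Lambda$ and carry only $a$, $b$, $c$, $d$ respectively in the upper-left position; thus $a, b, c, d \in \OO'$, so $\Lambda \subseteq \Mat(2,\OO')$.

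The second step is to show $\OO'$ is a $\ddagger$-order. Choose any nonzero $k \in H^-$: then $k \in H^0$, $k^2 \in F^\times$, and $x^\ddagger = k \overline{x} k^{-1}$ for all $x \in H$. Set $J = \begin{pmatrix} 0 & k \\ -k & 0 \end{pmatrix}$, so $J^2 = -k^2 I$ and $J$ is invertible. The formula $X^\ast := J \overline{X}^T J^{-1}$ defines an involutive anti-automorphism of $\Mat(2,H)$. From the identity $SL^\ddagger(2,H) = \{\gamma : \gamma J \overline{\gamma}^T = J\}$ recorded just after the definition of $SL^\ddagger(2,H)$, one reads off $\gamma^\ast = \gamma^{-1}$ for every $\gamma \in SL^\ddagger(2,H)$. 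Hence $\Gamma$ is closed under $\ast$, and so is $\Lambda = \ZZ[\Gamma]$ (since $\ast$ is $\ZZ$-linear and reverses products). A short matrix computation, exploiting that $k^2 \in F$ is central, gives
\[\begin{pmatrix} x & 0 \\ 0 & 0 \end{pmatrix}^\ast = \begin{pmatrix} 0 & 0 \\ 0 & k\overline{x}k^{-1} \end{pmatrix} = \begin{pmatrix} 0 & 0 \\ 0 & x^\ddagger \end{pmatrix},\]
and conjugating this by $w \in \Lambda$ transports it back to the upper-left corner. Thus $x \in \OO'$ implies $x^\ddagger \in \OO'$, so $\OO'$ is a $\ddagger$-order containing the maximal $\ddagger$-order $\OO$. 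By maximality $\OO' = \OO$, so $\Lambda = \Mat(2,\OO)$ and the proof is complete.

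The step I expect to require the most care is the pair of explicit anti-involution calculations: that $\gamma^\ast = \gamma^{-1}$ on all of $SL^\ddagger(2,H)$, and that $\ast$ acts on a corner elementary matrix by applying $\ddagger$ to its entry and swapping the corner. Because $k^2 \in F$ is central, each conjugation by $k$ collapses cleanly to $\ddagger$ and both identities are direct; still, one must keep careful track of noncommutativity in $H$, so these are the most delicate spots.
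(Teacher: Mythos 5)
Your proposal is correct and follows essentially the same route as the paper: both reduce to showing $\ZZ[\Gamma]=\Mat(2,\OO)$ via Lemmas \ref{Group Ring} and \ref{Lattices beget lattices}, extract matrix entries with the idempotents $e_{11},e_{22}$ and the Weyl element, and invoke maximality of $\OO$ as a $\ddagger$-order. Your anti-involution $X^\ast = J\overline{X}^TJ^{-1}$ is just a systematic repackaging of the paper's inverse formula $\begin{pmatrix} a & b \\ c & d\end{pmatrix}^{-1} = \begin{pmatrix} d^\ddagger & -b^\ddagger \\ -c^\ddagger & a^\ddagger\end{pmatrix}$, which is how the paper obtains closure of the entry ring under $\ddagger$ one element at a time rather than for your globally defined $\OO'$.
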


\begin{proof}
Suppose that $\Gamma$ is an arithmetic group containing $SL^\ddagger(2,\OO)$. By Lemma \ref{Lattices beget lattices}, we know that $\ZZ\left[\Gamma\right]$ is an order of $\Mat(2,H)$ which, by Lemma \ref{Group Ring} contains $\Mat(2,\OO)$. Choose any element $\gamma \in \Gamma$, and choose any one of its coordinates $x$. Since the group ring contains
	\begin{align*}
	\begin{pmatrix} 1 & 0 \\ 0 & 0 \end{pmatrix},\begin{pmatrix} 0 & 1 \\ 0 & 0 \end{pmatrix},\begin{pmatrix} 0 & 0 \\ 1 & 0 \end{pmatrix},\begin{pmatrix} 0 & 0 \\ 0 & 1 \end{pmatrix},
	\end{align*}
	
\noindent it is clear that $\ZZ\left[\Gamma\right]$ must contain $\Mat(2,\OO[x])$. However, since
	\begin{align*}
	\begin{pmatrix} a & b \\ c & d \end{pmatrix}^{-1} = \begin{pmatrix} d^\ddagger & -b^\ddagger \\ -c^\ddagger & a^\ddagger \end{pmatrix},
	\end{align*}
	
\noindent we see that it must actually contain $\Mat(2,\OO[x,x^\ddagger])$. Since $\Mat(2,\OO[x,x^\ddagger])$ is a subring of the group ring, it must also be an order, from which we get that $\OO[x,x^\ddagger]$ is an order. However, $\OO[x,x^\ddagger]$ is clearly closed under $\ddagger$, hence it is a $\ddagger$-order. Since $\OO$ is a maximal $\ddagger$-order, it follows that $\OO[x,x^\ddagger] = \OO$. Therefore, $\Gamma \subset \Mat(2,\OO)$. However, $SL^\ddagger(2,\OO) = \Mat(2,\OO) \cap SL^\ddagger(2,H)$, therefore $\Gamma = SL^\ddagger(2,\OO)$.
\end{proof}

\section{Isomorphisms Between Groups $SL^\ddagger(2,\OO)$:}\label{Isomorphisms Between Arithmetic Groups}
As in Section \ref{Maximal Subgroups Section}, we consider quaternion algebras $H$ over $\QQ$. For any choice of orthogonal involutions $\ddagger_1, \ddagger_2$ and corresponding maximal $\ddagger$-orders $\OO_1, \OO_2$ of $H$, $SL^{\ddagger_1}(2,\OO_1)$ will be commensurable with $SL^{\ddagger_2}(2,\OO_2)$, since by Theorem \ref{Correspondence between QAs and Spin Groups} they are arithmetic groups of a spin group. We shall want to study when such groups are isomorphic, and furthermore when we can say that they are conjugate inside of some group. We begin by establishing a necessary condition.

\begin{theorem}\label{Isomorphism Necessary Condition}
Let $H_1,H_2$ be rational quaternion algebras, with orthogonal involutions $\ddagger_1$ and $\ddagger_2$. Let $\OO_1,\OO_2$ be maximal $\ddagger_1,\ddagger_2$-orders of $H_1$ and $H_2$, respectively. If the group $SL^{\ddagger_1}(2,\OO_1)$ is isomorphic to $SL^{\ddagger_2}(2,\OO_2)$, then $H_1 \cong H_2$ and $\Mat(2,\OO_1) \cong \Mat(2, \OO_2)$. In particular, $\disc(\OO_1) = \disc(\OO_2)$.
\end{theorem}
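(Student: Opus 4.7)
The plan is to chain Lemma \ref{Ring is Invariant} with Lemma \ref{Group Ring} to promote the assumed abstract group isomorphism $SL^{\ddagger_1}(2,\OO_1) \cong SL^{\ddagger_2}(2,\OO_2)$ into a ring isomorphism of the associated matrix rings, and then read off both conclusions from basic facts about orders in central simple algebras. First I would verify that the arithmetic groups $\Gamma_i := SL^{\ddagger_i}(2,\OO_i)$ satisfy the hypotheses of Lemma \ref{Ring is Invariant}: they are lattices in the associated spin groups by Theorem \ref{Simply Connectedness} and the general theory of arithmetic subgroups; they are Galois-closed because they are cut out by $\ZZ$-integrality conditions; and their centers are exactly $\{\pm I\}$, since the ambient real spin groups $\text{Spin}(3,2)$ and $\text{Spin}(4,1)$ have center of order two. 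Lemma \ref{Ring is Invariant} then yields a ring isomorphism $\ZZ[\Gamma_1] \cong \ZZ[\Gamma_2]$, and Lemma \ref{Group Ring} identifies these rings as $\Mat(2,\OO_1)$ and $\Mat(2,\OO_2)$, giving the matrix isomorphism asserted in the theorem.

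To deduce $H_1 \cong H_2$, I would tensor the matrix isomorphism with $\QQ$ to obtain $\Mat(2,H_1) \cong \Mat(2,H_2)$ as $\QQ$-algebras, and then invoke the uniqueness portion of the Artin--Wedderburn theorem (the division-algebra factor in such a decomposition is unique up to isomorphism) to conclude $H_1 \cong H_2$. For the discriminant claim, a ring isomorphism preserves reduced discriminants, and a standard Morita-style calculation shows that the reduced discriminant of $\Mat(2,\OO)$ is determined by $\disc(\OO)$, so $\Mat(2,\OO_1) \cong \Mat(2,\OO_2)$ forces $\disc(\OO_1) = \disc(\OO_2)$.

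The principal obstacle is really embedded in Lemma \ref{Ring is Invariant}, whose proof uses Mostow rigidity to upgrade an abstract group isomorphism to a geometric conjugation inside the ambient Lie group; the only substantive work remaining for this theorem is to check that lemma's hypotheses and to unwind the resulting ring isomorphism into algebraic information about $H$ and $\OO$, both of which are routine.
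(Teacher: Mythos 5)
Your proposal is correct and follows essentially the same route as the paper: both chain Lemma \ref{Ring is Invariant} with Lemma \ref{Group Ring} to turn the group isomorphism into a ring isomorphism $\Mat(2,\OO_1) \cong \Mat(2,\OO_2)$, and then extract the discriminant equality from the standard fact that $\disc(\Mat(2,\OO_i)) = \disc(\OO_i)$. The only difference is that you spell out the verification of the hypotheses of Lemma \ref{Ring is Invariant} and the Artin--Wedderburn step for $H_1 \cong H_2$, which the paper leaves implicit.
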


\begin{proof}
By Lemmas \ref{Ring is Invariant} and \ref{Group Ring}, if the groups are isomorphic, then $\Mat(2,\OO_1) \cong \Mat(2,\OO_2)$ as rings. However, if $\Mat(2,\OO_1)$ and $\Mat(2,\OO_2)$ are isomorphic, then they have the same discriminant and it is a standard exercise to check that $\disc\left(\Mat(2,\OO_i)\right) = \disc(\OO_i)$.
\end{proof}

\begin{remark}
A tempting generalization of Theorem \ref{Isomorphism Necessary Condition} would be that if $SL^{\ddagger_1}(2,\OO_1) \cong SL^{\ddagger_2}(2,\OO_2)$, then $\OO_1 \cong \OO_2$. However, this statement is false.
\end{remark}

\begin{example}\label{Non Isomorphic Orders with Isomorphic Groups}
Take $\OO_1, \OO_2$ as in Example \ref{Non Isomorphic Orders}. We proved that $\OO_1 \ncong \OO_2$; however, we claim that $SL^\ddagger(2,\OO_1) \cong SL^\ddagger(2,\OO_2)$.
\end{example}

\begin{proof}
Define
	\begin{align*}
	\gamma = \begin{pmatrix} \frac{1 - 6i + j}{2\sqrt{3}} & \frac{1 + j}{2\sqrt{3}} \\ \frac{1 + 6i + j}{2\sqrt{3}} & \sqrt{3} i \end{pmatrix} \in SL^\ddagger\left(2,H \otimes_\QQ \QQ(\sqrt{3})\right).
	\end{align*}
	
\noindent By an easy computation,

	\begin{minipage}{.45\textwidth}
	\begin{align*}
	\gamma \begin{pmatrix} 0 & 1 \\ 0 & 0 \end{pmatrix} \gamma^{-1} &\in \Mat(2,\OO_2) \\
	\gamma \begin{pmatrix} 0 & i \\ 0 & 0 \end{pmatrix} \gamma^{-1} &\in \Mat(2,\OO_2) \\
	\gamma \begin{pmatrix} 0 & \frac{1 + j}{2} \\ 0 & 0 \end{pmatrix} \gamma^{-1} &\in \Mat(2,\OO_2)
	\end{align*}
	\end{minipage}%
	\begin{minipage}{.45\textwidth}
	\begin{align*}
	\gamma \begin{pmatrix} 0 & \frac{i + ij}{2} \\ 0 & 0 \end{pmatrix} \gamma^{-1} &\in \Mat(2,\OO_2) \\
	\gamma \begin{pmatrix} 0 & 0 \\ 1 & 0 \end{pmatrix} \gamma^{-1} &\in \Mat(2,\OO_2),
	\end{align*}
	\end{minipage}
	
\noindent from which it follows that there is a well-defined, injective ring homomorphism
	\begin{align*}
	\Mat(2,\OO_1) &\rightarrow \Mat(2,\OO_2) \\
	M &\mapsto \gamma M \gamma^{-1},
	\end{align*}
	
\noindent since the conjugated elements generate $\Mat(2,\OO_1)$ as a ring over $\ZZ$. As $\gamma \in SL^\ddagger\left(2,H \otimes_\QQ \QQ(\sqrt{3})\right)$, this homomorphism restricts to an injective group homomorphism
	\begin{align*}
	SL^\ddagger(2,\OO_1) &\rightarrow SL^\ddagger(2,\OO_2) \\
	M &\mapsto \gamma M \gamma^{-1}.
	\end{align*}
	
\noindent By the maximality of $SL^\ddagger(2,\OO_1)$ and $SL^\ddagger(2,\OO_2)$, this must be an isomorphism.
\end{proof}

\begin{remark}
A corollary of Example \ref{Non Isomorphic Orders with Isomorphic Groups} and Theorem \ref{Isomorphism Necessary Condition} is that $\OO_1, \OO_2$ are non-isomorphic rings such that $\Mat(2,\OO_1) \cong \Mat(2,\OO_2)$. While this does produce a new proof of that fact, this example was in fact already worked out by Chatters \cite[Example 5.1]{Chatters1996}
\end{remark}

We shall also seek an analog of the notion of isomorphism from Theorem \ref{Algebraic Group Isomorphism Fixing Base Group}---specifically, we want a notion of isomorphism that corresponds to the notion of isomorphism of algebras with involution. This has the benefit that it can be stated over any number field.

\begin{theorem}\label{Special Isomorphism of Orders Is Equivalent to Special Isomorphism of Groups}
Let $H$ a quaternion algebra over a number field $K$ with orthogonal involution $\ddagger$. Let $\OO_1, \OO_2$ be maximal $\ddagger$-orders of $H$. Let $K'$ be a field extension of $K$ containing the splitting fields of $X^2 = \nrm(x)$ for all $x \in H$. Then the following are equivalent.
	\begin{enumerate}
	\item There exists $\gamma \in SL^\ddagger(2,H \otimes_K K')$ such that $\gamma SL^\ddagger(2,\OO_1) \gamma^{-1} = SL^\ddagger(2,\OO_2)$ and $\gamma SL(2,K) \gamma^{-1} = SL(2,K)$.
	\item $(\OO_1,\ddagger) \cong (\OO_2,\ddagger)$.
	\end{enumerate}
\end{theorem}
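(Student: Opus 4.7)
The plan is to mirror the proof of Theorem~\ref{Algebraic Group Isomorphism Fixing Base Group}, treating the integrality condition and the role of $K'$ carefully.

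For $(2)\Rightarrow(1)$, given $\phi:(\OO_1,\ddagger)\to(\OO_2,\ddagger)$, I would first extend $K$-linearly to an isomorphism of $(H,\ddagger)$ with itself and apply Skolem--Noether to write $\phi(x)=uxu^{-1}$ for some $u\in H^\times$, with $uu^\ddagger=\lambda\in K^\times$ forced by $\ddagger$-compatibility (as in the earlier theorem). Since the reduced norm is invariant under any involution of the first kind, $\lambda^2=\nrm(u)^2$, so $\lambda=\pm\nrm(u)$, and the hypothesis on $K'$ produces $\alpha:=1/\sqrt{\nrm(u)}\in K'$. Taking
	\begin{align*}
	\gamma = \begin{pmatrix} \alpha u & 0 \\ 0 & \alpha^{-1}(u^{-1})^\ddagger \end{pmatrix} \in SL^\ddagger(2,H\otimes_K K'),
	\end{align*}
a direct computation with $u^\ddagger=\lambda u^{-1}$ shows that conjugation by $\gamma$ sends $\begin{pmatrix}a&b\\c&d\end{pmatrix}\in SL^\ddagger(2,\OO_1)$ to $\begin{pmatrix}\phi(a)&\epsilon\phi(b)\\\epsilon\phi(c)&\phi(d)\end{pmatrix}\in SL^\ddagger(2,\OO_2)$, where $\epsilon=\alpha^2\lambda\in\{\pm 1\}$; the $SL^\ddagger$ relations survive because $H^+$ is closed under negation. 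The same calculation shows $\gamma$ preserves $SL(2,K)$, and the reverse containment follows from the symmetric construction using $u^{-1}$.

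For $(1)\Rightarrow(2)$, I would first note that since conjugation by $\gamma$ is an $\mathfrak{o}_K$-algebra homomorphism, Lemma~\ref{Group Ring} gives $\gamma\Mat(2,\OO_1)\gamma^{-1}=\Mat(2,\OO_2)$. Next, imitating the proof of Theorem~\ref{Algebraic Group Isomorphism Fixing Base Group}, the condition $\gamma SL(2,K)\gamma^{-1}=SL(2,K)$ combined with Skolem--Noether for $\Mat(2,K)\subset\Mat(2,H\otimes K')$ yields a factorisation $\gamma=h\gamma'$ with $\gamma'\in GL(2,K)$ and $h\in(H\otimes K')^\times$ a scalar matrix. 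Because $\gamma$-conjugation must preserve $\Mat(2,H)$ entry-wise (from the order condition), $h$ normalises $H$ inside $H\otimes K'$, whose normaliser is $K'^\times\cdot H^\times$; hence $h=\mu u$ with $\mu\in K'^\times$ and $u\in H^\times$. Expanding the $SL^\ddagger$ relation on $\gamma$ gives $\mu^2 uu^\ddagger=\det(\gamma')^{-1}$, and since $uu^\ddagger\in H\cap K'=K$, we conclude $uu^\ddagger\in K^\times$, so that $x\mapsto uxu^{-1}$ is a $\ddagger$-preserving inner automorphism of $H$. Restricting the order equality $\gamma\Mat(2,\OO_1)\gamma^{-1}=\Mat(2,\OO_2)$ to scalar matrices---on which the $\gamma'$-part acts trivially and the scalar $h$-part acts as entry-wise conjugation by $u$---yields $u\OO_1 u^{-1}=\OO_2$, so $x\mapsto uxu^{-1}$ realises the desired isomorphism $(\OO_1,\ddagger)\cong(\OO_2,\ddagger)$.

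The hardest step is the factorisation of $\gamma$ in $(1)\Rightarrow(2)$, where one must both (i) produce the decomposition $\gamma=h\gamma'$ with $\gamma'\in GL(2,K)$ from the normalisation condition on $SL(2,K)$, and (ii) constrain the scalar $h$ to lie in $K'^\times\cdot H^\times$ using the order condition. Once this structural fact is in hand, the $SL^\ddagger$ relation forces $uu^\ddagger\in K^\times$ via $H\cap K'=K$, and the rest of the argument reduces to reading off $u\OO_1 u^{-1}=\OO_2$ by restricting to the scalar matrix subalgebra.
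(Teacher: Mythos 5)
Your proof is correct and rests on the same pillars as the paper's---Skolem--Noether plus Lemma \ref{Group Ring} giving $\gamma\,\Mat(2,\OO_1)\,\gamma^{-1}=\Mat(2,\OO_2)$---but the middle of your $(1)\Rightarrow(2)$ argument takes a somewhat different route. The paper replaces $\gamma$ by an element $\gamma'$ acting as the identity on $SL(2,K)$ and then forces $\gamma'$ to be diagonal by observing that conjugation must preserve the subrings $U_i$ and $L_i$ of $\Mat(2,\OO_i)$ commuting with the standard upper and lower unipotents, which yields $czc^\ddagger=0$ and $bzb^\ddagger=0$ for all $z$; the relation $uu^\ddagger=1$ then falls out of commutation with a single unipotent. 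You instead identify the residual factor as an element of the centralizer of $\Mat(2,K)$, i.e.\ a scalar matrix, and then descend to $H^\times$ via the normalizer $K'^\times\cdot H^\times$ of $H$ in $(H\otimes_K K')^\times$, deducing $uu^\ddagger\in K^\times$ from the quasi-determinant relation $\mu^2\det(\gamma')\,uu^\ddagger=1$ together with $H\cap K'=K$. Both are valid; your version makes explicit two points the paper elides---why the conjugating element may be taken in $H^\times$ rather than merely in $(H\otimes_K\overline{K})^\times$, and, in the $(2)\Rightarrow(1)$ direction, the sign ambiguity $uu^\ddagger=\pm\nrm(u)$, which you absorb into $\epsilon$ while the paper simply normalizes to $x^\ddagger=\overline{x}$---whereas the paper's unipotent computation handles the diagonalization and the $\ddagger$-compatibility of the resulting inner automorphism in one stroke.
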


\begin{proof}
The proof is similar to that of Theorem \ref{Algebraic Group Isomorphism Fixing Base Group}. If $\OO_1 \cong_\ddagger \OO_2$, then there exists $x \in H^\times$ such that $x^\ddagger = \overline{x}$ such that $\OO_2 = x \OO_1 x^{-1}$. Then $x' = x \otimes 1/\sqrt{\nrm(x)} \in H \otimes_F F'$ is an element of norm $\pm 1$, and so if we take
	\begin{align*}
	\gamma = \begin{pmatrix} x & 0 \\ 0 & \left(x^{-1}\right)^\ddagger \end{pmatrix} \in SL^\ddagger(2, H \otimes_F F'),
	\end{align*}
	
\noindent it will have the desired effect. In the other direction, if there exists $\gamma \in SL^\ddagger(2,H \otimes_F F')$ such that $\gamma SL^\ddagger(2,\OO_1) \gamma^{-1} = SL^\ddagger(2,\OO_2)$ and $\gamma SL(2,F) \gamma^{-1} = SL(2,F)$, then there exists $\gamma' \in SL^\ddagger(2,H \otimes \overline{F})$ such that $\gamma' SL^\ddagger(2,\OO_1) {\gamma'}^{-1} = SL^\ddagger(2,\OO_2)$ and $\gamma' M {\gamma'}^{-1} = M$ for all $M \in SL(2,F)$. This induces a ring isomorphism on the corresponding group rings; that is, by Lemma \ref{Group Ring}, there is a ring isomorphism
	\begin{align*}
	\Mat(2,\OO_1) &\rightarrow \Mat(2,\OO_2) \\
	M &\mapsto \gamma' M {\gamma'}^{-1}
	\end{align*}
	
\noindent such that this isomorphism restricts to the identity on $\Mat(2,\mathfrak{o}_K)$. In particular, we can define subrings
	\begin{align*}
	U_i = \left\{M \in \Mat(2,\OO_i) \middle| M\begin{pmatrix} 1 & 1 \\ 0 & 1 \end{pmatrix} = \begin{pmatrix} 1 & 1 \\ 0 & 1 \end{pmatrix}M\right\},
	\end{align*}
	
\noindent and we are guaranteed that the ring isomorphism between the $\Mat(2,\OO_i)$ restricts to a ring isomorphism between the $U_i$. However, it is easy to see that $M \in U_i$ if and only if it is upper triangular. Therefore, for any $z \in \OO_1$,
	\begin{align*}
	\gamma' \begin{pmatrix} 1 & z \\ 0 & 1 \end{pmatrix} {\gamma'}^{-1} \in U_2.
	\end{align*}
	
\noindent Write
	\begin{align*}
	\gamma' = \begin{pmatrix} a & b \\ c & d \end{pmatrix} \in SL^\ddagger(2,H \otimes_F \overline{F}),
	\end{align*}
	
\noindent and note that
	\begin{align*}
	\gamma' \begin{pmatrix} 1 & z \\ 0 & 1 \end{pmatrix} {\gamma'}^{-1} = \begin{pmatrix} * & * \\ czc^\ddagger & * \end{pmatrix},
	\end{align*}
	
\noindent which belongs to $U_2$ if and only if $czc^\ddagger = 0$. However, since $czc^\ddagger = 0$ for all $z \in \OO_1$, it must be that $czc^\ddagger = 0$ for all $z \in H$. It is readily checked that this is possible if and only if $c = 0$---if $H$ is a division algebra, this assertion is immediate, and otherwise $H \cong \Mat(2,\overline{F})$ where it becomes an easy calculation. By a similar argument with the sub-ring
	\begin{align*}
	L_i = \left\{M \in \Mat(2,\OO_i) \middle| M\begin{pmatrix} 1 & 0 \\ 1 & 1 \end{pmatrix} = \begin{pmatrix} 1 & 0 \\ 1 & 1 \end{pmatrix}M\right\},
	\end{align*}
	
\noindent we can prove that $b = 0$ as well. Thus $\gamma'$ is a diagonal matrix, which is to say that
	\begin{align*}
	\gamma' = \begin{pmatrix} u & 0 \\ 0 & \left(u^{-1}\right)^\ddagger \end{pmatrix}
	\end{align*}
	
\noindent for some $u \in H^\times$. Thus $\OO_2 = u\OO_1 u^{-1}$. Since
	\begin{align*}
	\begin{pmatrix} u & 0 \\ 0 & \left(u^{-1}\right)^\ddagger \end{pmatrix} \begin{pmatrix} 1 & 1 \\ 0 & 1 \end{pmatrix} \begin{pmatrix} u^{-1} & 0 \\ 0 & u^\ddagger \end{pmatrix} &= \begin{pmatrix} 1 & uu^\ddagger \\ 0 & 1 \end{pmatrix} = \begin{pmatrix} 1 & 1 \\ 0 & 1 \end{pmatrix},
	\end{align*}
	
\noindent and so in fact $(\OO_1,\ddagger) \cong (\OO_2,\ddagger)$.
\end{proof}

\begin{remark}
Note that in the statement of Theorem \ref{Algebraic Group Isomorphism Fixing Base Group}, we only had to select the field extension $K'$ so that we could embed the quaternion algebras with involution into a single quaternion algebra with involution. If Theorem \ref{Special Isomorphism of Orders Is Equivalent to Special Isomorphism of Groups} were entirely analogous, we would be able to take $K' = K$. We shall show in Section \ref{Conjugacy Section} that this strengthening of the statement is false: there exist maximal $\ddagger$-orders $\OO_1, \OO_2$ that are isomorphic as algebras with involution, but such that $SL^\ddagger(2,\OO_1)$ and $SL^\ddagger(2,\OO_2)$ are not conjugate inside $SL^\ddagger(2,H)$.
\end{remark}

\begin{remark}
In the statement of Theorem \ref{Special Isomorphism of Orders Is Equivalent to Special Isomorphism of Groups}, the requirement that we choose $\gamma \in SL^\ddagger(2,H \otimes_K K')$ such that $\gamma SL(2,F) \gamma^{-1} = SL(2,F)$ is necessary. Specifically, there exist maximal $\ddagger$-orders $\OO_1, \OO_2$ such that $(\OO_1, \ddagger) \ncong (\OO_2, \ddagger)$ but they are conjugate inside of $SL^\ddagger(2,H\otimes_K K')$ for some field extension $K'$. An example was produced in \cite{Sheydvasser2019}, but as it was stated in somewhat different language, we reproduce it here.
\end{remark}

\begin{example}\label{Isomorphic but not Conjugate Groups}
Let
	\begin{align*}
	\OO_1 &= \ZZ \oplus \ZZ i \oplus \ZZ \frac{1 + j}{2} \oplus \ZZ \frac{i + ij}{2} \subset \left(\frac{-1,-7}{\QQ}\right) \\
	\OO_2 &= \ZZ \oplus \ZZ i \oplus \ZZ \frac{i + j}{2} \oplus \ZZ \frac{1 + ij}{2} \subset \left(\frac{-1,-7}{\QQ}\right).
	\end{align*}
	
\noindent Both of these are maximal $\ddagger$-orders, if we take the usual involution $\left(w + xi + yj + zij\right)^\ddagger = w + xi + yj - zij$. Since $\tr(\OO_1 \cap H^+) = (1)$ and $\tr(\OO_2 \cap H^+) = (2)$, we see that $(\OO_1,\ddagger) \ncong (\OO_2,\ddagger)$. However, $(1 + i) \OO_1 (1 + i)^{-1} = \OO_2$, and therefore
	\begin{align*}
	\begin{pmatrix} \frac{1 + i}{\sqrt{2}} & 0 \\ 0 & \frac{-1 + i}{\sqrt{2}} \end{pmatrix} SL^\ddagger(2,\OO_1) \begin{pmatrix} \frac{1 + i}{\sqrt{2}} & 0 \\ 0 & \frac{-1 + i}{\sqrt{2}} \end{pmatrix}^{-1} &= SL^\ddagger(2,\OO_2).
	\end{align*}
\end{example}

\section{Conjugacy Classes in $SL^\ddagger(2,H)$:}\label{Conjugacy Section}
Our results thus far have shown cases where you can prove that if $SL^\ddagger(2,\OO_1)$ and $SL^\ddagger(2,\OO_2)$ are isomorphic as groups, then they are conjugate inside of $SL^\ddagger(2,H\otimes_K K')$ where $K'$ is an extension of $K$. Examples \ref{Non Isomorphic Orders with Isomorphic Groups} and \ref{Isomorphic but not Conjugate Groups} demonstrate that $K'$ might be a quadratic extension of $K$. Our goal here is to give some insight as to when we can take $K' = K$. Our main result is that determining conjugacy is actually a local problem.

\begin{theorem}\label{Local Rigidity}
Let $H$ be a quaternion algebra over a number field $K$, with orthogonal involution	$\ddagger$. Let $\OO_1, \OO_2$ be maximal $\ddagger$-orders of $H$. Then $SL^\ddagger(2,\OO_1)$ is conjugate to $SL^\ddagger(2,\OO_2)$ in $SL^\ddagger(2,H)$ if and only if for every prime ideal $\mathfrak{p}$ of $\mathfrak{o}_K$, $SL^\ddagger(2,\OO_{1,\mathfrak{p}})$ is conjugate to $SL^\ddagger(2,\OO_{2,\mathfrak{p}})$ in $SL^\ddagger(2,H_\mathfrak{p})$.
\end{theorem}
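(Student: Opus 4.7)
The forward direction is immediate, since any global conjugator in $SL^\ddagger(2,H)$ descends, via the diagonal embedding $SL^\ddagger(2,H) \hookrightarrow SL^\ddagger(2,H_\mathfrak{p})$, to a local conjugator at every finite place $\mathfrak{p}$. The substance is therefore the reverse direction, and my plan is to assemble the given local conjugators into a single global one using strong approximation (Corollary \ref{Strong Approximation}).

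First, I would establish that the local orders $\OO_{1,\mathfrak{p}}$ and $\OO_{2,\mathfrak{p}}$ actually coincide for all but finitely many primes. This is a standard lattice observation: both $\OO_1$ and $\OO_2$ are full-rank $\mathfrak{o}_K$-lattices in $H$, so $\OO_1 \cap \OO_2$ has finite index in each, and the finite set $S$ of primes where $\OO_{1,\mathfrak{p}} \neq \OO_{2,\mathfrak{p}}$ is contained in the primes dividing these indices. For $\mathfrak{p} \notin S$ the local groups already agree, so I may take the trivial local conjugator $\gamma_\mathfrak{p} = I$; for $\mathfrak{p} \in S$ I take the local conjugator $\gamma_\mathfrak{p} \in SL^\ddagger(2,H_\mathfrak{p})$ furnished by the hypothesis.

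Next, for each finite place, let $N_\mathfrak{p}$ denote the normalizer of $SL^\ddagger(2,\OO_{1,\mathfrak{p}})$ inside $SL^\ddagger(2,H_\mathfrak{p})$. Since $N_\mathfrak{p}$ contains the compact open subgroup $SL^\ddagger(2,\OO_{1,\mathfrak{p}})$ (any group normalizes itself), it is open. The set of elements of $SL^\ddagger(2,H_\mathfrak{p})$ carrying $SL^\ddagger(2,\OO_{1,\mathfrak{p}})$ onto $SL^\ddagger(2,\OO_{2,\mathfrak{p}})$ is then precisely the open coset $\gamma_\mathfrak{p} N_\mathfrak{p}$. Form the restricted product $U = \prod_\mathfrak{p} \gamma_\mathfrak{p} N_\mathfrak{p}$: for $\mathfrak{p} \notin S$ the $\mathfrak{p}$-component contains $SL^\ddagger(2,\OO_{1,\mathfrak{p}})$, so $U$ is a nonempty open subset of the finite-adele group of $SL^\ddagger(2,H)$. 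By Corollary \ref{Strong Approximation}, $SL^\ddagger(2,H)$ is dense in this adelic group, so there exists $\gamma \in U \cap SL^\ddagger(2,H)$.

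By construction, this $\gamma$ satisfies $\gamma\, SL^\ddagger(2,\OO_{1,\mathfrak{p}})\,\gamma^{-1} = SL^\ddagger(2,\OO_{2,\mathfrak{p}})$ at every $\mathfrak{p}$. Because $\OO_i = H \cap \bigcap_\mathfrak{p} \OO_{i,\mathfrak{p}}$ (and hence $SL^\ddagger(2,\OO_i) = SL^\ddagger(2,H) \cap \bigcap_\mathfrak{p} SL^\ddagger(2,\OO_{i,\mathfrak{p}})$), these local identities force the global identity $\gamma\, SL^\ddagger(2,\OO_1)\,\gamma^{-1} = SL^\ddagger(2,\OO_2)$. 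The main technical point I anticipate is verifying the openness of the normalizers $N_\mathfrak{p}$, together with the restricted-product condition for $U$, so that strong approximation can be invoked cleanly; once these are in hand, the rest is routine local-global bookkeeping.
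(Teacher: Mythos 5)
Your proposal is correct and follows essentially the same route as the paper: patch the local conjugators (trivial outside the finite set where $\OO_{1,\mathfrak{p}} = \OO_{2,\mathfrak{p}}$) into an adelic element, observe that the set of valid adelic conjugators is open, and invoke Corollary \ref{Strong Approximation} to descend to a global $\gamma \in SL^\ddagger(2,H)$. The only cosmetic difference is that you describe the open set as the coset $\gamma_\mathfrak{p} N_\mathfrak{p}$ of the normalizer, while the paper uses the translate $SL^\ddagger(2,\OO_{2,\mathfrak{p}})\gamma_\mathfrak{p}$; both yield the openness needed for strong approximation.
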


\begin{proof}
Obviously, if $SL^\ddagger(2,\OO_1)$, $SL^\ddagger(2,\OO_2)$ are conjugate, then all of their localizations will be conjugate. In the other direction, for almost all prime ideals $\mathfrak{p}$, $\OO_{1,\mathfrak{p}} = \OO_{2,\mathfrak{p}}$---therefore, we can choose elements $\gamma_{\mathfrak{p}} \in SL^\ddagger(2,H_\mathfrak{p})$ such that $\gamma_{\mathfrak{p}}SL^\ddagger(2,\OO_{1,\mathfrak{p}})\gamma_{\mathfrak{p}}^{-1} = SL^\ddagger(2,\OO_{2,\mathfrak{p}})$ and $\gamma_{\mathfrak{p}} = id$ for almost all prime ideals $\mathfrak{p}$. Therefore, there exists an element $\gamma' \in SL^\ddagger(2,H \otimes_K \mathbb{A}_K)$ of the adelic extension such that the $\mathfrak{p}$-th component of $\gamma'$ is the $\gamma_{\mathfrak{p}}$. If $\gamma' \in SL^\ddagger(2,H \otimes_K \mathbb{A}_K)$ is such that $\left(\gamma'\right)_{\mathfrak{p}}SL^\ddagger(2,\OO_{1,\mathfrak{p}})\left(\gamma'\right)_{\mathfrak{p}}^{-1} = SL^\ddagger(2,\OO_{2,\mathfrak{p}})$ for all prime ideals $\mathfrak{p}$, then the same must true of every element of $SL^\ddagger(2,\OO_2) \gamma'$. Therefore, such elements form an open set in $SL^\ddagger(2,H \otimes_K \mathbb{A}_K)$---in that case, Corollary \ref{Strong Approximation} proves that there must exist a $\gamma \in SL^\ddagger(2,H)$ satisfying this condition.
\end{proof}

Thus, determining whether groups $SL^\ddagger(2,\OO)$ are conjugate in $SL^\ddagger(2,H)$ can be reduced to checking the same over local fields, where it is far easier. It is worth noting that even if $(\OO_1, \ddagger) \cong (\OO_2,\ddagger)$, it does not follow that $SL^\ddagger(2,\OO_1)$ is conjugate to $SL^\ddagger(2,\OO_2)$ inside $SL^\ddagger(2,H)$; one may need to pass to a higher degree extension.

\begin{example}
Let $F$ be any characteristic $0$ local field with maximal ideal $\mathfrak{p}$. Choose any $\lambda \in \mathfrak{o}_F$ such that $1 + \lambda \in \mathfrak{p} \backslash \mathfrak{p}^2$. Then $H = \Mat(2,F)$ is a quaternion algebra over $F$, and
	\begin{align*}
	\begin{pmatrix} a & b \\ c & d \end{pmatrix}^\ddagger = \begin{pmatrix} a & c/\lambda \\ b\lambda & d \end{pmatrix}
	\end{align*}
	
\noindent defines an orthogonal involution. Since $\lambda \in \mathfrak{o}^\times$, $\OO_1 = \Mat(2,\mathfrak{o}_F)$ is a maximal $\ddagger$-order. Similarly,
	\begin{align*}
	\OO_2 = \underbrace{\begin{pmatrix} 1 & -1 \\ \lambda & 1 \end{pmatrix}}_{:= u} \OO_1 \begin{pmatrix} 1 & -1 \\ \lambda & 1 \end{pmatrix}^{-1}
	\end{align*}
	
\noindent must be a maximal $\ddagger$-order; indeed, $(\OO_1,\ddagger) \cong (\OO_2,\ddagger)$. However, $SL^\ddagger(2,\OO_1)$ and $SL^\ddagger(2,\OO_2)$ are not conjugate in $SL^\ddagger(2,H)$.
\end{example}

\begin{proof}
Suppose that there exist $a,b,c,d \in H$ such that
	\begin{align*}
	\gamma := \begin{pmatrix} a & b \\ c & d \end{pmatrix} \in SL^\ddagger(2,H)
	\end{align*}
	
\noindent and $\gamma SL^\ddagger(2,\OO_1)\gamma^{-1} = SL^\ddagger(2,\OO_2)$. Since
	\begin{align*}
	\begin{pmatrix} a & b \\ c & d \end{pmatrix}\begin{pmatrix} 1 & z \\ 0 & 1 \end{pmatrix}\begin{pmatrix} a & b \\ c & d \end{pmatrix}^{-1} &= \begin{pmatrix} * & aza^\ddagger \\ -czc^\ddagger & * \end{pmatrix} \\
	\begin{pmatrix} a & b \\ c & d \end{pmatrix}\begin{pmatrix} 1 & 0 \\ z & 1 \end{pmatrix}\begin{pmatrix} a & b \\ c & d \end{pmatrix}^{-1} &= \begin{pmatrix} * & -bzb^\ddagger \\ dzd^\ddagger & * \end{pmatrix},
	\end{align*}
	
\noindent we wish to determine for which $v \in H$ $v\OO_1^+ v^\ddagger \subset \OO_2$. This is the same as determining all $v \in H$ such that $u^{-1}v\OO_1^+ v^\ddagger u \subset \OO_1$. We shall show that this is possible only if $v \in \OO_1$, proving that $\gamma \in SL^\ddagger(2,\OO_1)$. Write
	\begin{align*}
	v = \begin{pmatrix} v_1 & v_2 \\ v_3 & v_4 \end{pmatrix},
	\end{align*}
	
\noindent so
	\begin{align*}
	u^{-1}v \begin{pmatrix} 1 & 0 \\ 0 & 0 \end{pmatrix} v^\ddagger u &= \begin{pmatrix}
 \frac{\left(v_1+v_3\right)^2}{\lambda +1} & -\frac{\left(\lambda  v_1-v_3\right) \left(v_1+v_3\right)}{\lambda (\lambda +1)} \\
 -\frac{\left(\lambda  v_1-v_3\right) \left(v_1+v_3\right)}{\lambda +1} & \frac{\left(\lambda  v_1-v_3\right)^2}{\lambda  (\lambda +1)} \end{pmatrix} \in \Mat(2,\mathfrak{o}_F) \\
	u^{-1}v \begin{pmatrix} 0 & 0 \\ 0 & 1 \end{pmatrix} v^\ddagger u &= \begin{pmatrix} \frac{\lambda  \left(v_2+v_4\right)^2}{\lambda +1} & -\frac{\left(\lambda  v_2-v_4\right) \left(v_2+v_4\right)}{\lambda +1} \\ -\frac{\lambda  \left(\lambda  v_2-v_4\right) \left(v_2+v_4\right)}{\lambda +1} & \frac{\left(\lambda  v_2-v_4\right)^2}{\lambda +1} \end{pmatrix} \in \Mat(2,\mathfrak{o}_F).
	\end{align*}
	
\noindent Note that this is only possible if $v_1 + v_3, \lambda v_1 - v_3, v_2 + v_4, \lambda v_2 - v_4 \in \mathfrak{p}$. From it follows that $(1 + \lambda)v_1,(1 + \lambda)v_3 \in \mathfrak{p}$, hence $v_1, v_3 \in \mathfrak{o}_F$. Therefore, $v_3, v_4 \in \mathfrak{o}_F$. We conclude that $v \in \OO_1$. Ergo, $\gamma \in SL^\ddagger(2,\OO_1)$, and so $\gamma SL^\ddagger(2,\OO_1) \gamma^{-1} = SL^\ddagger(2,\OO_1) = SL^\ddagger(2,\OO_2)$. However, since
	\begin{align*}
	u \begin{pmatrix} 1 & 0 \\ 0 & 0 \end{pmatrix} u^{-1} = \begin{pmatrix} \frac{1}{1+\lambda } & \frac{1}{1 + \lambda} \\ \frac{\lambda }{1+\lambda } & \frac{\lambda }{1 + \lambda} \end{pmatrix} \notin \OO_1,
	\end{align*}
	
\noindent $\OO_1 \neq \OO_2$, and so $SL^\ddagger(2,\OO_1) \neq SL^\ddagger(2,\OO_2)$. This is a contradiction, and so we are done.
\end{proof}

\bibliography{AlgebraicIsomorphism}
\bibliographystyle{alpha}
\end{document}